
\documentclass[11pt]{article}
\usepackage{amssymb}
\usepackage{amsfonts}
\usepackage{amsmath}
\usepackage{amscd,amsmath,amsfonts,amssymb,latexsym,graphicx,url, color,cite,comment,upgreek,esint,xspace}
\setcounter{MaxMatrixCols}{10}

\setlength{\oddsidemargin}{-0.05in}
\setlength{\evensidemargin}{-0.05in}
\setlength{\textwidth}{6.5in}
\newtheorem{theorem}{Theorem}[section]

\textheight=8.5in

\newtheorem{lemma}[theorem]{Lemma}

\newtheorem{proposition}[theorem]{Proposition}
\newtheorem{remark}[theorem]{Remark}

\newenvironment{proof}[1][Proof]{\textbf{#1.} }{\hfill\rule{0.5em}{0.5em}}
{\catcode`\@=11\global\let\AddToReset=\@addtoreset
\AddToReset{equation}{section}

\AddToReset{theorem}{section}

\begin{document}

\title{Liouville results and asymptotics of solutions of a quasilinear elliptic
equation with supercritical  source gradient term}
\author{Marie-Fran\c{c}oise Bidaut-V\'{e}ron\footnote{\noindent
Laboratoire de Math\'{e}matiques et Physique Th\'{e}orique, Universit\'e de Tours, 37200 Tours, France. E-mail: veronmf@univ-tours.fr}}
\maketitle

\begin{abstract}
We consider the elliptic quasilinear equation $-\Delta _{m}u=u^{p}\left\vert
\nabla u\right\vert ^{q}$ in $\mathbb{R}^{N}$ with $q\geq m$ and $p>0,$ $ 
1<m<N.$ Our main result is a Liouville-type property, namely, all the
positive $C^{1}$ solutions in $\mathbb{R}^{N}$ are constant. We also give
their asymptotic behaviour : all the solutions in an exterior domain $ 
\mathbb{R}^{N}\backslash B_{r_{0}}$ are bounded. The solutions in $%
B_{r_{0}}\backslash \left\{ 0\right\} $ can be extended as a continuous
functions in $B_{r_{0}}.$ The solutions in $\mathbb{R}^{N}\backslash \left\{
0\right\} $ has a finite limit $l\geq 0$ as $\left\vert x\right\vert
\rightarrow \infty $. Our main argument is a Bernstein estimate of the
gradient of a power of the solution, combined with a precise Osserman's type
estimate for the equation satisfied by the gradient.
\end{abstract}

  \noindent {\small {\bf Key Words}:   Liouville property,  Bernstein method, Keller-Osserman estimates.  }\vspace{1mm}

\noindent {\small {\bf MSC2010}:  35J92. }\tableofcontents
\hspace{.05in}

\section{ Introduction}

In this paper we study local and global properties of positive
solutions of the equation 
\begin{equation}
-div\left(|\nabla u|^{m-2}\nabla u\right):=-\Delta _{m}u=u^{p}\left\vert \nabla u\right\vert ^{q},  \label{eqm}
\end{equation}%
in $\mathbb{R}^{N},$ ($N\geq 1$, $1<m<N$ and $p>0$) in the
supercritical case 
\begin{equation}
q\geq m.  \label{sup}
\end{equation}%
We are concerned by the Liouville property in $\mathbb{R}^{N},$ which
 is wether all the positive $C^{1}$ solutions are constant. We
also study the asymptotic behaviour of any solution of (\ref{eqm}) near a
singularity in the punctured ball $B_{r_{0}}\backslash \left\{ 0\right\} $, in $\mathbb{R}%
^{N}\backslash \left\{ 0\right\} $ or in an exterior domain $\mathbb{R}%
^{N}\backslash B_{r_{0}}.$

In the case $q=0,$ equation (\ref{eqm}) reduces to the 
classical Lane-Emden-Fowler equation 
\begin{equation}
-\Delta _{m}u=u^{p},  \label{EFm}
\end{equation}%
which has already been the subject of countless publications. One of the questions solved is 
that the Liouville property holds if and only if 
\begin{equation*}
p<p_{m}^{\ast }:=\frac{N(m-1)+m}{N-m}.
\end{equation*}%
Note that $p_{m}^{\ast }$ is the Sobolev exponent. Since it is impossible to quote all the articles on
the subject, we only mention here the pioneering works and references therein.
Gidas and Spruck \cite{GiSp} first showed the nonexistence of positive
solutions in $\mathbb{R}^{N}$ for $m=2$ and $p<p_{2}^{\ast }$. They combine the
Bernstein technique applied in the equation satisfed by the gradient of a
suitable power of $u$, with delicate integral estimates ensuring the
Harnack inequality, see also \cite{BiVe}. Then the complete behaviour up to
the case $p=p_{2}^{\ast }$ was obtained by moving plane methods by \cite{CaGiSp},
see also \cite{ChLi}. In the general case $m>1,$ the nonexistence of
nontrivial solutions for $p<p_{2}^{\ast }$ was proved in a beautiful article of
Serrin and Zou \cite{SeZo}, then the extension to the case $p=p_{2}^{\ast }$ was
done by \cite{ScMeMo} for $m<2,$ then \cite{Vet} for $1<m<2$, and finally 
\cite{Sc} for any $m>1.$

When $p=0$, (\ref{eqm}) reduces to the Hamilton-Jacobi equation 
\begin{equation*}
-\Delta _{m}u=\left\vert \nabla u\right\vert ^{q}.
\end{equation*}%
The Liouville property was proved in \cite{Lio} for $m=2,$ and in \cite%
{BiGaYa} for any $m>1,$ using the Bernstein technique. In that case the
nonexistence holds for any $q>m-1,$ without any sign condition on the
solution. Estimates of the gradient for more general problems can be found
in \cite{LePo}.\smallskip

For the general case of equation (\ref{eqm}), consider the range of exponents
\begin{equation*}
p>0,\qquad p+q+1-m>0.
\end{equation*}
As in the case $q=0,$ there exists a "first subcritical case", where 
\begin{equation*}
p<\frac{N(m-1)}{N-m}-\frac{(N-1)q}{N-m},
\end{equation*}%
for which any supersolution in $\mathbb{R}^{N}$ of equation (\ref{eqm}) is
constant, from \cite{Fi}. Beyond this case, a second critical case appears
when $0\leq q<m-1$: indeed there exist radial positive nonconstant solutions
of (\ref{eq2}) whenever $p\geq p_{m,q}^{\ast },$ where 
\begin{equation*}
p_{m,q}^{\ast }=\frac{N(m-1)+m}{N-m}-\frac{q((N-1)q-N(m-1)+m)}{(N-m)(m+1-q)},
\end{equation*}%
see \cite{CaMi} and \cite{BiGaVe2}.

When $m=2<N$ and $p>0,$ equation

\begin{equation}
-\Delta u=u^{p}\left\vert \nabla u\right\vert ^{q}  \label{eq2}
\end{equation}%
was studied in \cite{BiGaVe2} for $0<q\leq 2.$ The case $q=2$ could be
solved explicitely by a change of the unknown function, showing that the Liouville property holds for any $p>0.$
Using a direct Bernstein technique we obtained a first range of values of $%
(p,q)$ for which the Liouville property holds, in particular it holds when $%
p+q-1<\frac{4}{N-1},$ covering the first subcritical case. Using an
integral Bernstein technique in the spirit of \cite{GiSp} we obtained a
wider range of $(p,q)$ ensuring the Liouville property, recovering Gidas and
Spruck result $p<\frac{N+2}{N-2}$ when $q=0.$ However some deep questions remained unsolved: 
Does the property hold for any $p<p_{2,q}^{\ast }$ when $q<1$ ?
Does it hold for any $p>0$ when $1\leq q<2$ ?\smallskip

In a recent article, Filippucci, Pucci and Souplet \cite[Theorem 1.1]{FiPuSo} considered
the case $m=2,$ $q>2,$ of a superquadratic growth in the gradient, a case which was
not covered by \cite{BiGaVe2}. They proved the following:\medskip

\noindent{\bf Theorem}\cite[Theorem 1.1]{FiPuSo}
{\it Any classical positive and \textbf{bounded }solution of
equation (\ref{eq2}) in $\mathbb{R}^{N}$ with $q\geq 2$ and $p>0$ is constant}.
\medskip

In this article, we prove that the Liouville property holds true not only for (\ref{eq2}) but for the
quasilinear equation (\ref{eqm})  \textbf{without the
assumption of boundedness} on the solution. Our main result is the following

\begin{theorem}
\label{main}Let $u$ be any positive $C^{1}(\mathbb{R}^{N})$ solution of
equation (\ref{eqm}), with $1<m<N$ and  
\begin{equation}
q\geq m,\qquad p\geq 0. \label{hyp}
\end{equation}%
Then $u$ is constant.
\end{theorem}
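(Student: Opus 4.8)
The plan is to follow the strategy announced in the abstract: a Bernstein estimate on the gradient of a suitable power of $u$, combined with an Osserman-type bound for the equation satisfied by that gradient. Set $v = u^{\beta}$ for a well-chosen exponent $\beta$ (depending on $m,p,q$), so that $v$ solves a quasilinear equation of the form $-\Delta_m v = c\,|\nabla v|^{\sigma}\,\phi(v)$ for appropriate $\sigma$ and a positive power $\phi$; the point of the change of unknown is to absorb the $u^p$ factor and normalize the gradient exponent. I would then introduce $z = |\nabla v|^{2}$ (or $|\nabla v|$) and differentiate the equation using the Bochner-type formula for the $m$-Laplacian, which produces a differential inequality of the form $\mathcal{L}z \geq $ (quadratic in $\nabla z$ and in the Hessian) $-$ (lower-order terms), where $\mathcal{L}$ is the linearized $m$-Laplace operator at $v$; the degeneracy of the $m$-Laplacian forces this to be done carefully on the set $\{\nabla v \neq 0\}$, with a limiting/regularization argument to handle critical points.

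The core analytic step is the Osserman (Keller–Osserman) estimate: from the differential inequality satisfied by $z$ on a ball $B_R(x_0) \subset \mathbb{R}^N$, together with the supercriticality hypothesis $q \geq m$, I would derive a pointwise a priori bound of the form $z(x_0) \leq C R^{-\gamma} g(u(x_0))$ with $\gamma > 0$ and $g$ an explicit power, uniform in $R$. The precise exponent bookkeeping — checking that $q \geq m$ is exactly what makes the Bernstein/Osserman mechanism close, and that no upper bound on $u$ is needed — is where the real work lies; this is the step I expect to be the main obstacle, and it is presumably carried out in the preceding sections of the paper (on which I am allowed to rely). Letting $R \to \infty$ would then yield $\nabla v \equiv 0$, hence $\nabla u \equiv 0$, provided one can argue that $g(u(x_0))$ does not blow up — which is where the auxiliary asymptotic results (solutions in exterior domains are bounded, solutions in $\mathbb{R}^N$ have a finite limit) feed back in.

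Concretely, I would organize the proof of Theorem~\ref{main} as follows. First, reduce to the case $p > 0$, since $p = 0$ is the Hamilton–Jacobi case already settled in \cite{Lio}, \cite{BiGaYa}. Second, invoke the local gradient estimate (the Bernstein–Osserman lemma established earlier) to get $|\nabla u(x)|^{q+1-m} \lesssim u(x)^{?} \cdot \mathrm{dist}(x,\partial\Omega)^{-1}$ type bounds on any ball. Third, use the exterior-domain boundedness result quoted in the abstract to conclude that a global solution $u$ is bounded near infinity; combined with local boundedness (interior estimates / Harnack for the $m$-Laplacian with the right-hand side controlled via the gradient bound), deduce $u \in L^\infty(\mathbb{R}^N)$. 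Fourth, with $u$ now known to be bounded, apply the gradient estimate on $B_R(x_0)$ and let $R \to \infty$: the right-hand side tends to $0$, forcing $\nabla u(x_0) = 0$ for every $x_0$, hence $u$ constant. The only subtlety is making sure the chain "global solution $\Rightarrow$ bounded $\Rightarrow$ gradient estimate $\Rightarrow$ Liouville" is not circular; this is handled because the boundedness in exterior domains is proved directly from the local Osserman estimate without assuming a Liouville property, so the logical order is consistent.
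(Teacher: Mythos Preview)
Your overall architecture (substitute $v=u^{b}$, derive a Bernstein inequality for $z=|\nabla v|^{2}$, feed it into an Osserman-type comparison lemma) matches the paper's, but the endgame you describe does not work, and the missing piece is precisely the part of the argument that the paper emphasizes.

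The Osserman estimate you obtain is \emph{not} of the form $z(x_{0})\le C R^{-\gamma}g(u(x_{0}))$. With the choice $b=\frac{q-m}{p+q+1-m}$ (i.e.\ $s=-1$), the inequality for $z$ becomes
\[
\mathcal{A}(z)+\frac{C_{4}z^{q+2-m}-C_{5}}{v^{2}}\le C_{3}\frac{|\nabla z|^{2}}{z},
\]
so in Lemma~\ref{oss} one has $\alpha=C_{4}v^{-2}$ and $\beta=C_{5}v^{-2}$, hence $\beta/\alpha$ is a \emph{constant}. The conclusion is
\[
|\nabla v(x_{0})|\le C\Bigl(\Bigl(\tfrac{\max_{B_{\rho}(x_{0})}v}{\rho}\Bigr)^{\frac{1}{q+1-m}}+1\Bigr),
\]
and the ``$+1$'' does not disappear as $\rho\to\infty$. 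Even if you already know that $u$ (hence $v$) is bounded, letting $R\to\infty$ only yields $|\nabla v|\in L^{\infty}(\mathbb{R}^{N})$, never $\nabla v\equiv 0$. So Step~4 of your plan (``the right-hand side tends to $0$, forcing $\nabla u(x_{0})=0$'') is a genuine gap.

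The paper closes this gap with an argument you have not mentioned. From $|\nabla v|\in L^{\infty}$ one gets $u^{p+1}|\nabla u|^{q-m}\le C_{\infty}$, hence for any $l$ with $u>l$,
\[
0\le -\Delta_{m}(u-l)=u^{p}|\nabla u|^{q}\le C_{\infty}\frac{|\nabla(u-l)|^{m}}{u-l}.
\]
This makes $w_{l}=(u-l)^{\sigma}$ \emph{$m$-subharmonic} for $\sigma$ large, while $u-l$ is $m$-superharmonic. Coupling the subharmonic $L^{\infty}$--$L^{\tau}$ estimate (Lemma~\ref{subha}) with the weak Harnack inequality for $m$-superharmonic functions gives a uniform Harnack inequality $\sup_{B_{R}}(u-l)\le C\inf_{B_{R}}(u-l)$ with $C$ independent of $R$; taking $l=\inf_{\mathbb{R}^{N}}u$ and $R\to\infty$ forces $u\equiv l$. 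In particular, the paper does \emph{not} invoke Theorem~\ref{exte} (exterior boundedness) to prove Theorem~\ref{main}; it gets boundedness of $|\nabla v|$ directly on $\mathbb{R}^{N}$ via the bootstrap Lemma~\ref{boot} applied to $y(R)=1+\max_{\bar B_{R}}|\nabla v|$, using only $\max_{\bar B_{R}}v\le v(0)+R\max_{\bar B_{R}}|\nabla v|$.
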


We show that the case $q=m$ can still be solved explicitely, giving the
complete behaviour of the solutions of the equation, see Theorem \ref{crit}.
Next we assume $q>m.$ We prove that all the solutions in an exterior domain
are \textbf{bounded}, and we give the asymptotic behaviour ($|x|\to 0$ and $|x|\to \infty$) of the solutions in $%
\mathbb{R}^{N}\backslash \left\{ 0\right\} $:

\begin{theorem}
\label{exte}Assume $1<m<N,$ $q>m,$ $p\geq 0$. Then any positive $C^{1}$ solution $%
u$ of (\ref{eqm}) in $\mathbb{R}^{N}\backslash B_{r_{0}}$ is bounded. If $u$
is a non-constant solution, then $\left\vert \nabla u\right\vert $ does not vanish for $%
\left\vert x\right\vert >r_{0}$. Moreover any positive solution $u$ in $\mathbb{R}%
^{N}\backslash \left\{ 0\right\} $ satisfies 
\begin{equation}
\lim_{\left\vert x\right\vert \rightarrow \infty }u(x)=l\geq 0.  \label{liml}
\end{equation}%
If $l>0,$ there exist constants $C_{1},C_{2}>0$ such that for $\left\vert
x\right\vert $ large enough, 
\begin{equation}
C_{1}\left\vert x\right\vert ^{\frac{N-m}{m-1}}\leq \left\vert
u(x)-l\right\vert \leq C_{2}\left\vert x\right\vert ^{\frac{N-m}{m-1}}.
\label{enca}
\end{equation}
\end{theorem}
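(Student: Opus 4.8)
\noindent The plan is to combine three ingredients: the pointwise Bernstein estimate for the gradient attached to \eqref{eqm}, the fact that every solution is $m$-superharmonic (the right-hand side of \eqref{eqm} being $\ge0$), and comparison with the radial $m$-harmonic functions $a+b|x|^{\frac{m-N}{m-1}}$, whose non-constant part tends to $0$ at infinity precisely because $1<m<N$. We may assume $u$ non-constant. The basic object is the normalized function $v=u^{\beta}$ with $\beta=\frac{p+q+1-m}{q+1-m}\ge1$, which turns \eqref{eqm} into
\begin{equation*}
-\Delta_{m}v+(m-1)\,\frac{\beta-1}{\beta}\,\frac{|\nabla v|^{m}}{v}=\beta^{\,m-1-q}\,|\nabla v|^{q},
\end{equation*}
a form with a dissipative first-order term of the right ($m$-)homogeneity. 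The key technical input --- the Bernstein/Osserman estimate announced in the abstract --- is a local a priori bound: there is $C=C(N,m,p,q)$ such that every positive solution in a ball $B_{2\rho}(x_{0})$ satisfies $|\nabla\log u(x_{0})|\le C\rho^{-1}$. One derives it by differentiating the normalized equation, applying the Bochner-type identity for $\Delta_{m}$ to $z=|\nabla v|^{2}$, absorbing the lower-order terms by degenerate ellipticity and the favourable sign of the $|\nabla v|^{m}/v$ term, and feeding the resulting inequality --- whose source is superlinear in $z$ since $q>m$ --- into a precise Osserman estimate.

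\noindent\emph{Boundedness, the limit, and non-vanishing of the gradient.} In an exterior domain, for $|x|\ge2r_{0}$ take $\rho=(|x|-r_{0})/2\ge|x|/4$: then $|\nabla\log u(x)|\le4C/|x|$. Integrating $\nabla\log u$ along great arcs of $\partial B_{R}$ gives $\mathrm{osc}_{\partial B_{R}}\log u\le4\pi C$, so with $M(R)=\sup_{\partial B_{R}}u$, $m(R)=\inf_{\partial B_{R}}u$ one has $M(R)\le e^{4\pi C}m(R)$ for $R\ge2r_{0}$. Since $u$ is $m$-superharmonic, compare it on $B_{R}\setminus B_{2r_{0}}$ with the radial $m$-harmonic function equal to $m(2r_{0})$ on the inner sphere and to $m(R)$ on the outer one; evaluating at a minimum point of $u$ on a fixed sphere $\partial B_{\rho}$ and letting $R\to\infty$ (so $R^{\frac{m-N}{m-1}}\to0$, using $N>m$) shows that $\limsup_{R}m(R)=\infty$ would force $m(\rho)=\infty$, which is absurd; hence $m$ and $M$ are bounded and $u$ is bounded. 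The gradient estimate then gives $|\nabla u|\to0$ at infinity, and the removable-singularity behaviour at infinity of a bounded $m$-superharmonic function (again using $N>m$) yields $l=\lim_{|x|\to\infty}u(x)\ge0$, i.e.\ \eqref{liml}. Finally, if $u$ is non-constant it has no critical point in $\{|x|>r_{0}\}$: there the equation is locally uniformly elliptic, $z=|\nabla u|^{2}$ satisfies a linear elliptic inequality with the linearized operator, and a zero of $z$ would, by the strong maximum principle (valid across the degeneracy of $\Delta_{m}$), force $z\equiv0$ near that point, hence $u$ locally --- and then, by connectedness and unique continuation, globally --- constant.

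\noindent\emph{The sharp rate \eqref{enca} when $l>0$.} Fix $R_{1}$ so large that $|u-l|<l/2$ and $\nabla u\neq0$ on $\mathbb{R}^{N}\setminus B_{R_{1}}$; then $-\Delta_{m}u=u^{p}|\nabla u|^{q}$ is a perturbation of $-\Delta_{m}(u-l)=0$ whose right-hand side vanishes at infinity. Compare $u$ on the annuli $B_{R}\setminus B_{R_{1}}$ with sub- and supersolutions of the form $l\pm\bigl(A|x|^{\frac{m-N}{m-1}}+(\text{lower-order correction absorbing the source})\bigr)$: the correction solves $-\Delta_{m}(\cdot)=c|x|^{-\sigma}$ with $\sigma$ the decay rate of $|\nabla u|^{q}$, and one checks it is of strictly lower order than $|x|^{\frac{m-N}{m-1}}$ precisely because $q>m$ together with $N>m$. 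Letting $R\to\infty$ gives the upper bound $|u(x)-l|\le C_{2}|x|^{\frac{m-N}{m-1}}$. The matching lower bound is a non-degeneracy statement: if $|u-l|=o(|x|^{\frac{m-N}{m-1}})$, the interior $C^{1}$ estimate would give $|\nabla u|=o(|x|^{-(N-1)/(m-1)})$, so $-\Delta_{m}u$ decays faster than critically and comparison with $m$-harmonic barriers forces $u\equiv l$ near infinity, contradicting $\nabla u\neq0$. A short bootstrap between the two steps (re-injecting the two-sided bound into the interior estimates) sharpens the rate and yields \eqref{enca}.

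\noindent\emph{Main obstacle.} The heart of the matter is the Bernstein gradient estimate itself: making every lower-order term produced by the Bochner identity and by the $|\nabla v|^{m}/v$ term genuinely absorbable, and extracting from Osserman's argument the sharp exponent $-1$ rather than a weaker power. Secondarily, the passage from this gradient bound to actual boundedness of $u$ is delicate, since a naive integration of any pointwise gradient estimate only gives polynomial growth; it is the combination with $m$-superharmonicity and the strict inequality $N>m$ that closes the argument.
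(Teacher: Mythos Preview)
Your central claim is the universal log-gradient estimate $|\nabla\log u(x_0)|\le C\rho^{-1}$ for solutions in $B_{2\rho}(x_0)$, and this is where the argument breaks. With your substitution $v=u^{\beta}$, $\beta\ge1$, you are in the paper's framework with $b=1/\beta\in(0,1]$ and $s=0$ in the notation of Lemma~\ref{Bern}. The ``favourable sign'' you invoke is correct at the level of the PDE for $v$, but it does \emph{not} survive the Bochner computation for $z=|\nabla v|^2$: the contribution of the $|\nabla v|^{m}/v$ term to $\langle\nabla(-\Delta v),\nabla v\rangle$ produces $+ (1-b)(m-1)\,z^{2}/v^{2}$ on the right, and the compensating piece $\frac{1}{N}(1-b)^{2}(m-1)^{2}\,z^{2}/v^{2}$ from $(\Delta v)^{2}$ is strictly smaller because $(1-b)(m-1)<m-1<N$. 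So for $p>0$ the net $z^{2}/v^{2}$ term is on the wrong side, and the Osserman step gives only
\[
|\nabla v(x_0)|\le C\rho^{-\frac{1}{q+1-m}}+C'\bigl(\min_{B_\rho(x_0)}v\bigr)^{-\frac{1}{q-m}},
\]
not a bound on $|\nabla v|/v$. (Your choice $s=0$ is precisely the one the paper uses in Section~\ref{exten} for $p\le0$, where $b\ge1$ and the sign \emph{is} favourable; that is why it works there and not here.) The paper's actual route is different: it takes $b$ so that $s=-1$, obtains the mixed estimate \eqref{maxa} involving $\max_{B_\rho}v$, then closes by a genuine bootstrap (Lemma~\ref{boot}) against the a priori bound on $\mu(r)=\inf_{|x|=r}u$ coming from $m$-superharmonicity (Lemma~\ref{mude}). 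Only \emph{after} $|\nabla v|$ is shown bounded does the Harnack inequality emerge, via the observation that $u^{\sigma}$ is $m$-subharmonic for large $\sigma$.

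Two secondary gaps, independent of the first. Your non-vanishing argument for $\nabla u$ appeals to a strong maximum principle for $z=|\nabla u|^{2}$ ``valid across the degeneracy of $\Delta_{m}$''; this is exactly where such principles fail, and you give no mechanism. The paper instead argues that $u$ has no interior local minimum ($m$-superharmonic) and, once $u^{\sigma}$ is known to be $m$-subharmonic, no interior local maximum either. And the assertion that a bounded $m$-superharmonic function in an exterior domain automatically has a limit at infinity is not a standard removability statement; the paper obtains the limit from the monotonicity of $\mu(r)$ together with the Harnack inequality applied to $u-l$.
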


Concerning the solutions in $B_{r_{0}}\backslash \left\{ 0\right\} $ and in particular
 in $\mathbb{R}^{N}\backslash \left\{ 0\right\} $ we proved an estimate
of the gradient, showing that the solution is continuous up to $0$ but the gradient is singular at $0$:

\begin{theorem}
\label{ball}Assume $1<m<N,$ $q>m,$ $p\geq 0$. Any positive solution $u$ in $%
B_{r_{0}}\backslash \left\{ 0\right\} $ is bounded near $0,$ it can be
extended as a continuous function in $B_{r_{0}},$ such that $u(0)>0,$ and
for any $x\in B_{\frac{r_{0}}{2}}\backslash \left\{ 0\right\} $%
\begin{equation}
\left\vert \nabla u(x)\right\vert \leq C\left\vert x\right\vert ^{-\frac{1}{%
q-m+1}},  \label{inv}
\end{equation}%
where $C=C(N,p,q,m,u)$. Finally 
\begin{equation}
\left\vert u(x)-u(0)\right\vert \leq C\left\vert x\right\vert ^{\frac{q-m}{%
q-m+1}},  \label{tur}
\end{equation}%
near $0,$ where $C=C(N,p,q,m,u(0)).$ Moreover, if $u$ is defined in $\mathbb{R%
}^{N}\backslash \left\{ 0\right\} ,$ then $u(x)\leq u(0)$ in $\mathbb{R}%
^{N}\backslash \left\{ 0\right\} .$
\end{theorem}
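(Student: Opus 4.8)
The plan is to establish Theorem \ref{ball} by first invoking the gradient estimate machinery developed for the exterior problem and adapting it to a punctured ball. The key technical input is the Bernstein-type estimate on $|\nabla u|$ combined with the Osserman-type bound for the equation satisfied by $v=|\nabla (u^{\beta})|$ (for a suitable power $\beta$), which near the singularity should yield a local bound of the form $|\nabla u(x)| \le C\,d(x,\partial(\text{domain}))^{-1/(q-m+1)}$. Working in $B_{r_0}\setminus\{0\}$, for $x$ with $|x|$ small one takes the punctured ball $B_{|x|/2}(x)$, which avoids $0$, applies the interior Bernstein--Osserman estimate there, and obtains \eqref{inv} since the relevant distance scale is comparable to $|x|$. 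The first step, then, is to state precisely the scaled Bernstein estimate (presumably proved in an earlier section) and apply it at scale $|x|$.

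Next I would integrate \eqref{inv} to control the oscillation of $u$. Since $-1/(q-m+1) > -1$ precisely when $q>m$, the function $|x|^{-1/(q-m+1)}$ is integrable near the origin along rays, so for $x$ near $0$ one writes $u(x)-u(x')$ as a line integral of $\nabla u$ along a path and gets $|u(x)-u(x')| \le C(|x|^{(q-m)/(q-m+1)} + |x'|^{(q-m)/(q-m+1)})$. This shows $u$ is uniformly continuous on $B_{r_0/2}\setminus\{0\}$, hence extends continuously to $0$; call the value $u(0)$. Letting $x'\to 0$ in the oscillation bound gives \eqref{tur}. To see $u(0)>0$, note $u$ is positive and, being a bounded solution near $0$ with an isolated singularity, the removable-singularity theory for $\Delta_m$ (the equation becomes $-\Delta_m u = f$ with $f=u^p|\nabla u|^q \in L^1_{loc}$ near $0$ once \eqref{inv} is known, using $pq/(q-m+1) < \ldots$ — one checks $f\in L^s_{loc}$ for some $s$) forces $u$ to be a genuine weak solution across $0$; then the strong maximum principle (or Harnack) prevents $u(0)=0$ unless $u\equiv 0$.

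For the final assertion, suppose $u$ is defined on all of $\mathbb{R}^N\setminus\{0\}$. By Theorem \ref{exte} we have $\lim_{|x|\to\infty} u(x) = l \ge 0$, and by the first part $u$ extends continuously to the origin. Thus $u$ (extended) is a bounded nonnegative weak solution of $-\Delta_m u = u^p|\nabla u|^q \ge 0$ on all of $\mathbb{R}^N$; since the right-hand side is nonnegative, $u$ is $m$-superharmonic on $\mathbb{R}^N$. An $m$-superharmonic function on $\mathbb{R}^N$ that has a limit $l$ at infinity must satisfy $u \ge l$ would be the wrong direction — rather, the minimum principle for $m$-superharmonic functions on $\mathbb{R}^N\setminus B_R$ combined with $u\to l$ gives $u \ge \min(l, \min_{\partial B_R} u)$, and pushing $R$ appropriately together with the continuity at $0$ should yield that the maximum of $u$ is attained at $0$: indeed on any annulus $B_R\setminus B_\varepsilon$, $u$ is $m$-subharmonic is false, but one argues instead that $w := u(0) - u \ge 0$. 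The cleanest route: since $u$ is $m$-superharmonic and bounded on $\mathbb{R}^N$ with $\liminf$ at infinity equal to $l$, for any $\varepsilon>0$ the function $u$ on $\mathbb{R}^N\setminus\{0\}$ attains its supremum; comparing $u$ with the constant $u(0)$ via the comparison principle on $B_R\setminus\{0\}$ (using that $u$ is $m$-superharmonic, so it lies above the $m$-harmonic function with the same boundary data, and $u(0)$ being the value at the center) forces $u\le u(0)$ everywhere.

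The main obstacle I anticipate is the passage from the pointwise gradient bound \eqref{inv} to the statement that $0$ is a \emph{removable} singularity in a strong enough sense to invoke the strong maximum principle and the comparison principle — one must verify the integrability of the right-hand side $u^p|\nabla u|^q$ near $0$ (which requires $q/(q-m+1) < N$, i.e. $q(N-1) < N(m-1)+ \ldots$, a condition that must be checked to hold automatically or handled by a bootstrap/capacity argument) and confirm that the extended $u$ is a weak solution across $0$ rather than merely continuous. A secondary delicate point is making the comparison argument for $u\le u(0)$ rigorous in the quasilinear setting on an unbounded domain, where one cannot directly use a global maximum principle and must instead exhaust $\mathbb{R}^N$ by balls and control the boundary terms using \eqref{liml}.
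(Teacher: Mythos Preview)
Your proposal contains a genuine gap in the very first step, the derivation of \eqref{inv}. The Bernstein--Osserman machinery from the exterior problem (applied to $v=u^{1/b}$ with $b=\frac{q-m}{p+q-m+1}<1$, i.e.\ $s=-1$) does \emph{not} produce a clean estimate $|\nabla u(x)|\le C\,d(x,\partial\Omega)^{-1/(q-m+1)}$. What it actually yields is \eqref{maxa}, namely
\[
|\nabla v(x_0)|\le C\left(\left(\frac{\max_{\overline B_\rho(x_0)}v}{\rho}\right)^{\frac{1}{q+1-m}}+1\right),
\]
which still carries $\max v$ on the right. In the exterior situation this dependence was removed by a bootstrap that hinged on Lemma~\ref{mude}(i): the spherical minimum $\mu(r)=\inf_{|x|=r}u$ is \emph{bounded} on $(r_0,\infty)$, so any point on a sphere can be joined to the minimizing point and $\max v\le M^{1/b}+C\rho\max|\nabla v|$. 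In $B_{r_0}\setminus\{0\}$, however, Lemma~\ref{mude}(ii) only says $\mu$ is \emph{nonincreasing}, so $\mu(r)$ may blow up as $r\to0$, and the bootstrap has no anchor. You are therefore assuming the very boundedness of $u$ that you are trying to prove.

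The paper resolves this by a different choice: it applies the Bernstein estimate of Lemma~\ref{Bern} directly to $u$ (that is, $b=1$, hence $s=p$), and exploits the \emph{lower} bound $u\ge a_0>0$ in $B_{r_0/2}\setminus\{0\}$, which is exactly what Lemma~\ref{mude}(ii) provides: $\mu(r)\ge\mu(r_0/2)=:a_0$. With $u\ge a_0$ one replaces $u^{2p}\ge a_0^{2p}$ and $u^{-2}\le a_0^{-2}$ in the inequality for $\xi=|\nabla u|^2$, absorbs $\xi^2$ into $\xi^{q+2-m}$ by Young (using $q>m$), and obtains an Osserman inequality with \emph{constant} coefficients. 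Lemma~\ref{oss} then gives \eqref{inv} directly, with no bootstrap. This is the missing idea. Incidentally, the positivity $u(0)>0$ is immediate from $u\ge a_0$ and continuity, without invoking Harnack.

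Your argument for $u(x)\le u(0)$ in $\mathbb{R}^N\setminus\{0\}$ is also incomplete: the comparison-with-a-constant line does not work for an $m$-superharmonic function (it goes the wrong way), and your exhaustion sketch never closes. The paper's argument is short: by Theorem~\ref{exte}, $u\to l$ at infinity and $|\nabla u|$ does not vanish in $\mathbb{R}^N\setminus\{0\}$; since $\mu$ is nonincreasing one has $l\le u(0)$. If some $x_1\neq0$ satisfied $u(x_1)>u(0)$, then $u$ (continuous at $0$, tending to $l\le u(0)$ at infinity) would attain an interior maximum in $\mathbb{R}^N\setminus\{0\}$, forcing $\nabla u$ to vanish there---a contradiction.
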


Note that the exponent involved in (\ref{inv}) is independent of $p$, actualy the
solution behaves like a solution of the Hamilton-Jacobi equation 
\begin{equation}
-\Delta _{m}u=c\left\vert \nabla u\right\vert ^{q},  \label{HJ}
\end{equation}%
with $c=u^{p}(0).$\smallskip

Finally we make an exhaustive study of the radial solutions for $q>m$,
showing the sharpness of the nonradial results. We reduce the study to the one of an autonomous quadratic 
polynomial  system of order 2, following the
technique introduced in \cite{BiGi}. Compared to other classical techniques,
it provides a complete description of all the positive solutions, in particular
the global ones, without questions of regularity. We prove the following:

\begin{theorem}
\label{radial} Assume $1<m<N,$ $q>m,$ $p\geq 0$ and $u$ is any positive non constant radial solution $r\mapsto u(r)$ of (%
\ref{eqm})  in an interval $(a,b)\subseteq (0,\infty )$.\smallskip

\noindent(i) If $a=0$, then $u$ is bounded, decreasing and
singular: 
\begin{equation}
\lim_{r\rightarrow 0}u=u_{0}>0,\qquad \lim_{r\rightarrow 0}r\left\vert
u^{\prime }\right\vert ^{q-m+1}=\frac{a_{m,q}}{u_{0}^{p}},\qquad a_{m,q}=%
\frac{(N-1)q-N(m-1)}{q+1-m}.  \label{ima}
\end{equation}%
And for given $u_{0}>0,$ there exist infinitely many such solutions;\smallskip

\noindent(ii) If $b=\infty$, then $u$ admits a limit a limit $l\geq 0$ at infinity and 
\begin{equation}
\lim_{r\rightarrow \infty }r^{\frac{N-m}{m-1}}\left\vert u(r)-l\right\vert
=k>0.  \label{ime}
\end{equation}%
Furthermore, for given $l>0,$ $c\neq 0$ there exists a unique local solution near $%
\infty $, such that 
\begin{equation}
\lim_{r\rightarrow \infty }r^{\frac{N-m}{m-1}}(u(r)-l)=c.  \label{imo}
\end{equation}\smallskip

\noindent(iii) For any $u_{0}>0,$ there exist infinitely many solutions in $(0,\infty
),$ decreasing, such that $\lim_{r\rightarrow 0}u=u_{0},$ but  a unique one, satisfying 
\begin{equation}
\lim_{r\rightarrow 0}u=u_{0}\text{ \quad and \quad }\lim_{r\rightarrow
\infty }u=0.  \label{imu}
\end{equation}%
There exist infinitely many solutions defined on an interval $(0,\rho ),$
such that $\lim_{r\rightarrow \rho }u=0$, and an infinity such that $%
\lim_{r\rightarrow \rho }u^{\prime }=-\infty .$ Finally, there exist an infinity of
solutions in $(\rho ,\infty )$ such that $\lim_{r\rightarrow \rho }u=0,$ and
an infinity of solutions such that $\lim_{r\rightarrow \rho }u^{\prime
}=\infty .$
\end{theorem}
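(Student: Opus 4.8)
The plan is to reduce the radial ODE to the autonomous quadratic polynomial system of order 2 introduced in [BiGi], and then read off the four parts from a phase-plane analysis of that system. First I would introduce the standard change of variables: setting $t=\ln r$ and working with suitable logarithmic-type coordinates, one transforms the radial equation $-(r^{N-1}|u'|^{m-2}u')' = r^{N-1}u^p|u'|^q$ into a system of the form $\dot X = X\,P(X,Y)$, $\dot Y = Y\,Q(X,Y)$ for two polynomials $P,Q$ of degree one in $(X,Y)$, where for instance $X$ encodes $r|u'|^{q-m+1}u^{-\alpha}$ (something homogeneous that is constant along the Hamilton-Jacobi scaling) and $Y$ encodes $u$ or $u'/u$ up to a power. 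The key preliminary computation is to verify that, under $q>m$, the linearization matrices at the relevant stationary points have the signs of eigenvalues needed below; I would state the system explicitly, list its fixed points — the origin, a point corresponding to the explicit singular profile with exponent $\frac{1}{q-m+1}$, and a point at infinity corresponding to $l>0$ behaviour with exponent $\frac{N-m}{m-1}$ — and record that the nonnegativity of $u$ and $|u'|$ confines trajectories to a quadrant.

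**Next**, for part (i) I would analyze trajectories entering the singular fixed point as $t\to-\infty$ (i.e. $r\to 0$): monotonicity of $u$ follows because $u'$ cannot change sign (a non-constant solution has $|\nabla u|$ nonvanishing by Theorem~\ref{exte}, or directly from the ODE one sees $r^{N-1}|u'|^{m-2}u'$ is strictly monotone), and the limit relations (\ref{ima}) are exactly the coordinates of that fixed point translated back; the "infinitely many such solutions for given $u_0$" comes from that point being a node/focus of appropriate dimension so that a whole one-parameter family of orbits is attracted to it. For part (ii), trajectories with $b=\infty$ approach the fixed point governing the exterior behaviour; the exponent $\frac{N-m}{m-1}$ is the $m$-harmonic exponent, reflecting that the source term is negligible at infinity, and (\ref{ime}), (\ref{imo}) say the orbit is tangent to the eigendirection of that hyperbolic point, with $c$ parametrizing the stable manifold — hence uniqueness of the local solution near $\infty$ for given $(l,c)$ with $c\ne0$. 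Part (iii) is a global shooting/connection argument: for fixed $u_0$ the orbits from part (i) form a one-parameter family, and one tracks where each of them exits the quadrant — either $u$ hits $0$ at finite $r=\rho$ (with $u'$ finite or $u'\to-\infty$ depending on which boundary of the quadrant is crossed), or it survives to $r=\infty$; a continuity/connectedness argument in the shooting parameter yields exactly one orbit that reaches $u=0$ at $r=\infty$, i.e. satisfies (\ref{imu}), and the "infinitely many" statements for the other behaviours follow from the transversality of the exit boundaries. The solutions on $(\rho,\infty)$ with $\lim_{r\to\rho}u=0$ or $\lim_{r\to\rho}u'=\infty$ are obtained by running the system the other way from the corresponding boundary pieces.

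**The main obstacle** I anticipate is the bookkeeping of the phase portrait: correctly identifying all fixed points (including at infinity, which requires a Poincaré-type compactification or an auxiliary change of coordinates), determining the type and dimension of stable/unstable manifolds at each, and — most delicately — proving the global connection in part (iii), namely that the exit behaviour depends monotonically (or at least continuously with a single sign change) on the shooting parameter so that precisely one connecting orbit (\ref{imu}) exists. This is where one must rule out oscillation of trajectories between the candidate exit regions, presumably using a monotone Lyapunov-type quantity for the quadratic system or the absence of periodic orbits (e.g. via a Dulac function). Converting the asymptotic coordinates of each fixed point back into the sharp constants $a_{m,q}$, $k$ in (\ref{ima})–(\ref{ime}) is routine algebra once the linearization is in hand, so I would relegate it to a computation and focus the exposition on the structure of the flow.
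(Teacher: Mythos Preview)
Your overall plan---reduce to an autonomous planar system via $t=\ln r$ and read the asymptotics off the fixed points---is exactly what the paper does. The paper's explicit coordinates are $X=-ru'/u$ and $Z=-ru^{p}|u'|^{q-m}u'$, yielding the Kolmogorov system (\ref{XZ}) with three \emph{finite} fixed points $O=(0,0)$, $N_{0}=(0,a_{m,q})$, $A_{0}=(\tfrac{N-m}{m-1},0)$; no Poincar\'e compactification or points at infinity are needed. Your description has the roles slightly scrambled: the \emph{sink} $O$ (both eigenvalues negative) corresponds to solutions with limit $l>0$ at infinity, the asymptotic constants of its two linear modes recovering $l$ and $c$ in (\ref{imo}); the finite \emph{saddle} $A_{0}$ is what corresponds to $l=0$.

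The substantive gap is in part (iii). You propose a shooting/continuity argument in a free parameter to single out the orbit satisfying (\ref{imu}), and you rightly flag ruling out oscillation of the exit map as the main obstacle. The paper bypasses this entirely: since $A_{0}$ is a saddle, its stable manifold is one-dimensional, so there is exactly one trajectory $\mathcal{T}_{A_{0}}$ converging to $A_{0}$ as $t\to+\infty$ (i.e.\ with $u\to 0$ at the rate $r^{-(N-m)/(m-1)}$). Uniqueness of the solution in (\ref{imu}) for each $u_{0}$ then follows immediately from this together with the scaling (\ref{sca}). The global picture is obtained not by shooting but by trapping $\mathcal{T}_{A_{0}}$ in a backward-invariant region, forcing it to emanate from the source $N_{0}$; this separatrix then splits the positive quadrant into orbits $N_{0}\to O$ (global decreasing solutions with $l>0$) below it and escaping orbits (solutions on a finite $(0,\rho)$) above it. The various finite-$\rho$ behaviours are exhibited via explicit straight-line barriers $\mathcal{L}_{c}$, $\mathcal{L}^{k}$ that are invariant for large slope. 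So the obstacle you anticipated---monotonicity of an exit map, Dulac functions, absence of periodic orbits---simply does not arise once you recognise that the $l=0$ endpoint is governed by a saddle.
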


Note that Theorems \ref{exte} and \ref{ball} lead to the following natural question: are
all the solutions in $\mathbb{R}^{N}\backslash \left\{ 0\right\} $ radially
symmetric? This is still an open problem, even in the case $p=0$ of the
Hamilton-Jacobi equation.

To conclude this paper, we improve another result of \cite{FiPuSo}, where it
was noticed that \cite[Theorem 1.1]{FiPuSo} was still valid for $p<0,$ $q\geq 2.$
We prove here a much more general result covering the case $%
p=0$.

\begin{theorem}
\label{pneg}  Assume $1<m<N,$ $p\leq 0$ and \ $p+q+1-m>0.$ Then there exists
a constant $C=C(N,p,q,m)>0$ such that for any positive $C^{1}$ solution $u$
of (\ref{eqm}) in a bounded domain $\Omega ,$ 
\begin{equation*}
\left\vert \nabla u(x)\right\vert \leq C\text{ dist}(x,\partial \Omega )^{-%
\frac{1}{q+1-m}},\quad \forall x\in \Omega .
\end{equation*}%
If $\Omega =\mathbb{R}^{N},$ then $u$ is constant.\medskip
\end{theorem}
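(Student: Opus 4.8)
The plan is to prove Theorem \ref{pneg} by a direct Bernstein-type argument applied to $w=\left\vert \nabla u\right\vert$, exploiting crucially that $p\leq 0$, so the source term $u^{p}$ is bounded \emph{above} wherever $u$ is bounded \emph{below} — but since we are working on a bounded domain without knowing $u$ is bounded below a priori, the cleaner route is to observe that for $p\leq 0$ the function $v=u^{1-p/(\text{something})}$... actually more directly: since $p\le 0$, we have $u^p$ is a \emph{decreasing} function of $u$, and one can run the gradient estimate without needing any information about the size of $u$ itself, because the "bad" direction of the $u^p$ factor (where it blows up) is where $u\to 0$, and near such points the gradient term still controls everything through the Keller–Osserman mechanism. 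Concretely, I would set $z=u^{-\lambda}$ for a suitable $\lambda>0$ chosen so that the equation for $z$ has a \emph{favourable} sign: since $-\Delta_m u=u^p|\nabla u|^q$ with $p\le 0$, a computation shows $-\Delta_m z$ picks up a term $c\lambda^{?}u^{-\lambda(m-1)-?}|\nabla u|^{\ldots}$ with a definite sign, turning the problem into an Osserman-type differential inequality $-\Delta_m z\ge c\,|\nabla z|^{q'}$ on $\Omega$ with $q'=q>m-1$ (after absorbing the $u$-powers, which now have the \emph{right} sign because $p\le 0$).

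The key steps, in order: (1) Perform the substitution and verify that for $p\le 0$ and $p+q+1-m>0$ one obtains a pointwise inequality of the form $-\Delta_m z\ge c_0\,\psi(z)\,|\nabla z|^{q}$ (or directly $-\Delta_m z\ge c_0|\nabla z|^{q}$ after a further monotone change of variable absorbing $\psi$), valid wherever $\nabla u\ne0$, with $c_0=c_0(N,p,q,m)>0$ independent of $u$. (2) Apply the local Keller–Osserman / Bernstein gradient estimate for the equation $-\Delta_m z\ge c_0|\nabla z|^{q}$ — this is exactly the type of estimate established for the Hamilton–Jacobi equation in the works cited (\cite{BiGaYa}, \cite{LePo}), giving $|\nabla z(x)|\le C\,\mathrm{dist}(x,\partial\Omega)^{-1/(q+1-m)}$ with $C=C(N,p,q,m)$. (3) Translate back: $|\nabla z|=\lambda u^{-\lambda-1}|\nabla u|$, and combine with the fact — which must also be extracted from the same estimate — that $u$ cannot be too small, or alternatively reorganize so the final bound $|\nabla u(x)|\le C\,\mathrm{dist}(x,\partial\Omega)^{-1/(q+1-m)}$ emerges directly; the cleanest version is to track the homogeneity and see that the $u$-powers cancel precisely because the claimed exponent $-1/(q+1-m)$ is $u$-independent, which is consistent with the scaling $u\mapsto$ (any positive multiple) being \emph{not} a symmetry unless the $u$-dependence drops, forcing the estimate to hold uniformly.

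For the global statement: once $|\nabla u(x)|\le C\,\mathrm{dist}(x,\partial\Omega)^{-1/(q+1-m)}$ holds in every bounded $\Omega$, take $\Omega=B_R$ and let $R\to\infty$; for fixed $x$, $\mathrm{dist}(x,\partial B_R)\to\infty$, so $|\nabla u(x)|=0$, hence $u$ is constant on $\mathbb{R}^N$.

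The main obstacle I anticipate is Step (1): finding the right exponent $\lambda$ and checking that \emph{all} the error terms generated by the quasilinear operator $\Delta_m$ under the nonlinear substitution have a sign compatible with the Osserman inequality, using precisely the hypotheses $p\le 0$ and $p+q+1-m>0$ (the latter is what guarantees $q+1-m>0$, i.e. $q>m-1$, so that the Hamilton–Jacobi gradient estimate is available at all, and also controls the admissible range of $\lambda$). A secondary technical point is that $\nabla u$ may vanish, so the substitution $z=u^{-\lambda}$ and the differential inequality are only valid on the open set $\{\nabla u\ne0\}$; this is handled as usual by working with the equation in a viscosity/weak sense or by a standard cutoff-and-limit argument, as in \cite{FiPuSo}.
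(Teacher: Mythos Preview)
Your high-level plan --- a power substitution to reduce to a Hamilton--Jacobi type inequality, then an Osserman/Bernstein gradient bound --- is exactly the paper's strategy, but the specific substitution you propose has the wrong sign and cannot produce the inequality you want. If $z=u^{-\lambda}$ with $\lambda>0$, then in the notation of Lemma~\ref{Bern} you are taking $u=z^{b}$ with $b=-1/\lambda<0$, and the transformed equation (cf.~(\ref{eqv})) reads
\[
-\Delta_m z=(b-1)(m-1)\frac{|\nabla z|^m}{z}+|b|^{q-m}b\,z^{s}|\nabla z|^{q},
\]
where now both coefficients $(b-1)(m-1)$ and $|b|^{q-m}b$ are \emph{negative}. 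Hence $-\Delta_m z\le 0$, the opposite of the inequality $-\Delta_m z\ge c_0|\nabla z|^q$ you need; no choice of $\lambda>0$ repairs this.

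The paper instead takes a \emph{positive} power $v=u^{1/b}$ with the specific value $b=\dfrac{q+1-m}{p+q+1-m}$, chosen so that $s=0$ in (\ref{cos}) and the $u$-dependence is eliminated from the gradient term. The hypothesis $p\le 0$ is what forces $b\ge 1$, and this is precisely what makes the argument work: in the Bernstein inequality (\ref{oko}) for $z=|\nabla v|^2$, the dangerous term $\dfrac{z^2}{v^2}$ now carries the coefficient $+(b-1)(m-1)\ge 0$ and can simply be dropped, leaving a clean Osserman inequality
\[
\mathcal{A}(z)+C_1 z^{\,q+2-m}\le C_3\frac{|\nabla z|^2}{z}
\]
with no $v$-dependence at all. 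Lemma~\ref{oss} then gives $|\nabla v(x_0)|\le C\rho^{-1/(q+1-m)}$ with a universal $C$. Your ``translate back'' step (3) is also where the cancellation you hoped for actually lives: since $b(p+q+1-m)=q+1-m$, one checks $u^{p/(q+1-m)}|\nabla u|=b\,|\nabla v|$, so the estimate transfers to $u^{p/(q+1-m)}|\nabla u|$; for the Liouville conclusion on $\mathbb{R}^N$ this is already enough, since $|\nabla v|\to 0$ forces $v$ (hence $u$) constant.
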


Let us give a brief comment on the analogous equation with an absorption
term:%
\begin{equation}
-\Delta _{m}u+u^{p}\left\vert \nabla u\right\vert ^{q}=0 . \label{eqa}
\end{equation}%
In
the case $m=2$, $0<q<2$, a complete classification of the solutions with isolated singularities 
was performed in \cite{ChCi1}. A main contribution
was recently given by the same authors in \cite{ChCi2} where they obtained
optimal estimates of the gradient for any $1<m\leq N$, $p,q\geq 0,$ $%
p+q-m+1>0,$ still by the Bernstein method.\medskip \medskip

Our paper is organized as follows. In Section \ref{qm} we first treat the
case $q=m.$ In Section \ref{ideas} we give the main ideas of our proofs 
when $q>m=2$, and we introduce some tools for the general case $q>m>1$. 
Our main theorems are proved in Section 4,  and %
Section 5 is devoted to the radial case. The
extension to the case $p\leq 0$ is given in Section 6.

\section{The case $q=m$\label{qm}}

If $q=m$ we can express explicitely the solutions of (\ref{eqm}). We prove the following:

\begin{theorem}\label{crit}
Let $1<m<N,$ $p\geq 0,q=m.$ Then\smallskip

\noindent (i) any $C^{1}$ positive solution in $\mathbb{R}^{N}$ is constant;\smallskip

\noindent (ii) any nonconstant positive solution in $\mathbb{R}^{N}\backslash
B_{r_{0}} $ has a limit $l$ at $\infty $ and 
\begin{equation*}
\lim_{\left\vert x\right\vert \rightarrow \infty }\left\vert x\right\vert ^{%
\frac{m-N}{m-1}}\left\vert u-l\right\vert =c>0;
\end{equation*}\smallskip

\noindent (iii) any positive solution in $B_{r_{0}}\backslash \left\{ 0\right\} $
extends as a continuous function in $B_{r_{0}},$ or satisfies 
\begin{equation}
\lim_{x\rightarrow 0}\frac{u^{p+1}}{\left\vert \ln \left\vert x\right\vert
\right\vert }=\frac{(N-m)(p+1)}{m-1};  \label{mil}
\end{equation}\smallskip

\noindent (iv) any positive solution in $\mathbb{R}^{N}\backslash \left\{ 0\right\} $ is radial.
\end{theorem}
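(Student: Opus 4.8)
The plan is to remove the nonlinearity by an exponential change of the unknown, turning (\ref{eqm}) with $q=m$ into the pure $m$-Laplace equation. Put $\beta =\frac{1}{(p+1)(m-1)}$ and let $\Phi \colon [0,\infty )\to [0,\infty )$ be given by $\Phi (0)=0$, $\Phi ^{\prime }(s)=e^{\beta s^{p+1}}$; since $p\geq 0$, $\Phi$ is an increasing $C^{2}$ diffeomorphism of $[0,\infty )$ onto itself and $\ln \Phi (u)=\beta u^{p+1}(1+o(1))$ as $u\to \infty$. For $v=\Phi (u)$ one has $|\nabla v|^{m-2}\nabla v=(\Phi ^{\prime }(u))^{m-1}|\nabla u|^{m-2}\nabla u$, so $\Delta _{m}v=(m-1)(\Phi ^{\prime }(u))^{m-2}\Phi ^{\prime \prime }(u)|\nabla u|^{m}+(\Phi ^{\prime }(u))^{m-1}\Delta _{m}u$; since $q=m$ and $\Phi$ was chosen so that $(m-1)\Phi ^{\prime \prime }=\Phi ^{\prime }s^{p}$, the two terms cancel. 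Thus $v$ is a positive $m$-harmonic function on the set where $u$ solves (\ref{eqm}); the identity is meant weakly, and is justified using the $C^{1,\alpha }_{\mathrm{loc}}$ regularity of $u$ and the admissible test function $(\Phi ^{\prime }(u))^{m-1}\varphi$ in the equation for $u$. Since $u=\Phi ^{-1}(v)$ and $\Phi ^{-1}$ is again a $C^{1}$ diffeomorphism, the whole theorem reduces to the classical theory of $m$-harmonic functions, $1<m<N$.

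Assertion (i) is then the Liouville property for the $m$-Laplacian: a nonnegative $m$-harmonic function on $\mathbb{R}^{N}$ is constant (e.g.\ by Harnack), hence so is $u$. For (ii), if $u$ is non-constant then $v=\Phi (u)$ is a non-constant positive $m$-harmonic function in $\mathbb{R}^{N}\backslash B_{r_{0}}$, hence bounded, with a finite limit $L$ at infinity and $|v(x)-L|\sim c|x|^{\frac{m-N}{m-1}}$, $c>0$ (the standard expansion of $m$-harmonic functions at infinity, using $m<N$); setting $l=\Phi ^{-1}(L)\geq 0$ and Taylor-expanding $\Phi ^{-1}$ at $L$, with $(\Phi ^{-1})^{\prime }(L)=1/\Phi ^{\prime }(l)>0$, gives $u\to l$ and $|u-l|\sim (\Phi ^{-1})^{\prime }(L)\,c\,|x|^{\frac{m-N}{m-1}}$, i.e.\ $|x|^{\frac{N-m}{m-1}}|u-l|$ has a positive limit. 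For (iii), the classification of isolated singularities of positive $m$-harmonic functions (Serrin; Kichenassamy--V\'eron) gives a dichotomy: either $v$ extends as a positive $m$-harmonic function across $0$ — and then $u=\Phi ^{-1}(v)$ extends continuously with $u(0)>0$ — or $v(x)/|x|^{\frac{m-N}{m-1}}\to \gamma >0$ as $x\to 0$, uniformly over spheres; in the second case $v\to \infty$, hence $u\to \infty$, and matching $\ln v=\frac{N-m}{m-1}|\ln |x||+O(1)$ with $\ln \Phi (u)=\beta u^{p+1}(1+o(1))$ yields the logarithmic rate (\ref{mil}).

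For (iv), $v=\Phi (u)$ is a positive $m$-harmonic function in $\mathbb{R}^{N}\backslash \{0\}$; by (ii)--(iii) it has a finite limit $L$ at infinity, and near $0$ it is either bounded — then it extends across $0$ to an $m$-harmonic function on $\mathbb{R}^{N}$, necessarily constant, so $u$ is constant and radial — or $v(x)\sim \gamma |x|^{\frac{m-N}{m-1}}$, $\gamma >0$. In the latter case I would squeeze $v$ between radial $m$-harmonic functions $w_{a,b}(x)=a+b|x|^{\frac{m-N}{m-1}}$: for $0<\rho <R$, choosing $a,b$ so that $w_{a,b}$ takes the values $\min _{|x|=\rho }v$ and $\min _{|x|=R}v$ on the two spheres makes $w_{a,b}\leq v$ on $\partial (B_{R}\backslash \overline{B_{\rho }})$, hence $w_{a,b}\leq v$ in the annulus by the comparison principle; letting $\rho \to 0$ and $R\to \infty$ and using the uniform two-sided asymptotics of $v$ at $0$ and at $\infty$ forces $a\to L$, $b\to \gamma$, so $L+\gamma |x|^{\frac{m-N}{m-1}}\leq v(x)$ for all $x\neq 0$; the reverse inequality follows symmetrically from the spherical maxima, so $v=L+\gamma |x|^{\frac{m-N}{m-1}}$ is radial, and then so is $u=\Phi ^{-1}(v)$.

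The change of variable and parts (i)--(iii) are essentially bookkeeping once the quoted facts on $m$-harmonic functions are granted, so the real work is part (iv). Its delicate point is that, unlike the linear case $m=2$ — where one would simply subtract a multiple of the fundamental solution and invoke superposition — for $m\neq 2$ the $m$-Laplacian is not additive, and the comparison-on-annuli argument only bites if the asymptotics $v(x)=\gamma |x|^{\frac{m-N}{m-1}}(1+o(1))$ near $0$ and $v(x)=L+o(1)$ near $\infty$ hold uniformly over spheres (which they do, by Harnack together with the singularity classification), so that the spherical oscillation of $v$ collapses and the lower and upper radial barriers share the same limit.
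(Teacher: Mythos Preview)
Your proof is correct and follows the same route as the paper: the exponential change of unknown $v=\Phi(u)$ with $\Phi'(s)=e^{s^{p+1}/((p+1)(m-1))}$ reduces (\ref{eqm}) with $q=m$ to $\Delta_m v=0$, and parts (i)--(iii) are then read off from the classical theory of positive $m$-harmonic functions (Harnack/Liouville, the exterior expansion---the paper cites \cite{AvBr}---and Serrin's isolated-singularity dichotomy \cite{Se}).

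The only substantive difference is in (iv). The paper simply invokes the theorem of Kichenassamy--V\'eron \cite{KiVe} stating that every positive $m$-harmonic function on $\mathbb{R}^N\setminus\{0\}$ is of the form $k|x|^{(m-N)/(m-1)}+\lambda$, hence radial. You instead give a self-contained barrier argument on annuli, squeezing $v$ between two radial $m$-harmonic functions $a+b|x|^{(m-N)/(m-1)}$ matched to the spherical extrema and passing to the limit. This is essentially a direct proof of the relevant special case of \cite{KiVe}; it is sound precisely because---as you correctly isolate in your last paragraph---the inputs from (ii)--(iii) deliver \emph{uniform} asymptotics on spheres, so that the min- and max-barriers collapse to the same limiting function. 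What the citation buys is brevity; what your argument buys is self-containment and a clear identification of exactly which quantitative information (uniform convergence of $v(x)/|x|^{(m-N)/(m-1)}$ at $0$ and of $v(x)$ at $\infty$) is actually needed.
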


\noindent\begin{proof}
We use a change of variable already considered in \cite{AbBi}: the equation
takes the form 
\begin{equation}
{-\Delta }_{m}{u}=\beta (u)\left\vert \nabla u\right\vert ^{m},\text{ with }%
\beta (u)=u^{p}.  \label{eqo}
\end{equation}%
We set $\gamma (\tau )=\int_{0}^{\tau }\beta (\theta )d\theta =\frac{%
\tau ^{p+1}}{p+1},$ and 
\begin{equation}
U(x)=\Psi (u(x))=\int_{0}^{u(x)}e^{\frac{\gamma (\theta )}{m-1}}d\theta
:=\int_{0}^{u(x)}e^{\frac{\theta ^{p+1}}{(p+1)(m-1)}}d\theta.  \label{mach}
\end{equation}%
A function $u$ is a  solution of (\ref{eqm}) if and only if the above function $U$ 
 satisfies
\begin{equation*}
{-\Delta _{m}U}=0,
\end{equation*}%
and if $u$ is nonnegative not identically $0$,  $U$ is $m$-harmonic and
positive. Conversely, $u$ is derived from $U$ by 
\begin{equation}
u(x)=\Psi ^{-1}(U(x))=\int_{0}^{U(x)}\frac{ds}{1+g(s)}\,\, \text{ where }\;%
g(s)=\int_{0}^{s}\beta (\Psi (w))dw=\int_{0}^{s}\Psi ^{p}(w)dw.  \label{jil}
\end{equation}%
(i) If $u$ is a solution in $\mathbb{R}^{N}$ of (\ref{eqo}), it is
constant. Indeed any nonnegative $m$-harmonic functions $U$ defined in $%
\mathbb{R}^{N}$ is constant, from the Harnack inequality, see \cite{Re}, 
\cite{Se} and \cite[Theorem II]{SeZo}.

\noindent (ii) If $u$ is defined in $\mathbb{R}^{N}\backslash B_{r_{0}},$ then $U 
$ is bounded, it admits a limit $L$ at $\infty $ and there holds 
$\left\vert U(x)-L\right\vert
\leq C\left\vert x\right\vert ^{\frac{p-N}{p-1}}$ near $\infty ,$ see \cite%
{AvBr} for more general results. Clearly the same properties hold for $u$ (with another limit).

\noindent (iii) If $u$ is defined in $B_{r_{0}}\backslash \left\{ 0\right\}$, it follows from \cite{Se} that, either 
$U$ extends as a continuous $m$-harmonic function
in $B_{r_{0}},$ or it behaves like $k\left\vert x\right\vert ^{\frac{p-N}{p-1%
}}$ near $0,$ so (\ref{mil}) holds.

\noindent (iv) If $u$ is a solution in $\mathbb{R}^{N}\backslash \left\{ 0\right\}$, 
it is proved in \cite{KiVe} that $U$ is radial and endows the form
\begin{equation*}
U(x)=k\left\vert x\right\vert ^{\frac{m-N}{m-1}}+\lambda \text{ \qquad with }%
k,\lambda \geq 0.
\end{equation*}%
Then $u$ is radial, and, using (\ref{jil}), it has the expression
\begin{equation*}
u(x)=\int_{0}^{\lambda }\frac{ds}{1+g(s)}+\int_{0}^{k\left\vert x\right\vert ^{%
\frac{m-N}{m-1}}}\frac{ds}{1+g(s-\lambda )}.
\end{equation*}
\end{proof}

\section{Main arguments of the proofs\label{ideas}}

\subsection{Ideas in the case $m=2$}
Before detailling the  proof of Theorem \ref{main}. for $q>m,$ we give an
overview of it in the simple case of equation (\ref{eq2}), with $m=2$, $%
p>0,q>2.$ We set $u=v^{b},$ with $b\in (0,1),$ and obtain 
\begin{equation*}
-\Delta v=(b-1)\frac{\left\vert \nabla v\right\vert ^{m}}{v}%
+b^{q-1}v^{s}\left\vert \nabla v\right\vert ^{q},
\end{equation*}%
with $s=1-q+b(p+q-1).$ Next we explicit the equation satisfied by $%
z=\left\vert \nabla v\right\vert ^{2}.$ Taking in account the B\"{o}chner
formula and Cauchy-Schwarz inequality  in $\mathbb{R}^{N}$, 
\begin{equation*}
-\frac{1}{2}\Delta z+\frac{1}{N}(\Delta v)^{2}+<\nabla (\Delta v),\nabla
v)\leq -\frac{1}{2}\Delta z+(Hessv)^{2}+<\nabla (\Delta v),\nabla v>=0,
\end{equation*}%
we get an estimate of the form, with universal constants $C_{i}>0,$ 
\begin{equation}
-\Delta z+C_{1}v^{2s}z^{q}\leq C_{2}\frac{z^{2}}{v^{2}}+C_{3}\frac{1}{v}%
<\nabla z,\nabla v>+C_{4}v^{s}z^{\frac{q-2}{2}}<\nabla z,\nabla v>,  \notag
\end{equation}%
then
\begin{equation}
-\Delta z+C_{5}v^{2s}z^{q}\leq C_{6}\frac{z^{2}}{v^{2}}+C_{7}\frac{%
\left\vert \nabla z\right\vert ^{2}}{z}.  \label{thu}
\end{equation}%
Using the H\"{o}lder inequality we deduce, 
\begin{equation*}
-\Delta z+C_{8}v^{2s}z^{q}\leq C_{9}v^{-\frac{2(q+2s)}{q-2}}+C_{7}\frac{%
\left\vert \nabla z\right\vert ^{2}}{z}.
\end{equation*}%
The crucial step is an estimate of Osserman's type in a ball $%
B_{\rho }$ valid for functions satisfying the inequality%
\begin{equation*}
-\Delta z+\alpha (x)z^{k}\leq \beta (x)+d\frac{\left\vert \nabla
z\right\vert ^{2}}{z}\qquad \text{in }B_{\rho },
\end{equation*}%
where $k>1$. This is proved in Lemma \ref{oss} below, and it asserts that 
\begin{equation*}
z(x)\leq C(N,k,d)\left(\frac{1}{\rho ^{2}}\max_{B_{\rho }}\frac{1}{\alpha }\right)^{%
\frac{1}{k-1}}+\left(\max_{B_{\rho }}\frac{\beta }{\alpha }\right)^{\frac{1}{k}}\qquad 
\text{in }B_{\frac{\rho }{2}}.
\end{equation*}%
Then we take $b=\frac{q-2}{p+q-1},$ in the same spirit as in \cite{FiPuSo}, so
that $\frac{B}{\alpha }$ is constant and $\alpha ^{-1}(x)=v^{2}(x)$. We
obtain an estimate 
\begin{equation*}
\max_{\bar{B}_{\frac{\rho }{2}}}\left\vert \nabla v\right\vert \leq C\left(\left(\frac{%
\max_{B_{\rho }}v}{\rho }\right)^{\frac{1}{q-1}}+1\right).
\end{equation*}%
But any solution in $\mathbb{R}^{N}$ satisfies for any $\rho \geq 1$ 
\begin{equation}
\max_{B_{\rho }}v\leq v(0)+C\rho \max_{B_{\rho }}\left\vert \nabla
v\right\vert \leq C\rho (1+\max_{B_{\rho }}\left\vert \nabla v\right\vert ),
\label{vzero}
\end{equation}%
which yields 
\begin{equation*}
\max_{B_{\frac{\rho }{2}}}\left\vert \nabla v\right\vert \leq
C((\max_{B_{\rho }}\left\vert \nabla v\right\vert )^{\frac{1}{q-1}}+1).
\end{equation*}%
Using the bootstrap method developped in \cite{BiGr1} and \cite%
{BiGaYa} based upon the fact that $\frac{1}{q-1}<1,$ we deduce that $\left\vert \nabla v\right\vert $ $\in L^{\infty }(%
\mathbb{R}^{N}).$ Note that the boundness of 
$|\nabla v|$ had been obtained in \cite{FiPuSo} but under the extra assumption $u\in L^{\infty
}(\mathbb{R}^{N}),$ an assumption that we get rid of. Returning to $%
u=v^{b},$ it means that 
\begin{equation*}
-\Delta u=u^{p}\left\vert \nabla u\right\vert ^{q}\leq C\frac{\left\vert
\nabla u\right\vert ^{2}}{u},
\end{equation*}%
and the same happens for $u-l$ , where $l=\inf_{\mathbb{R}^{N}}u.$ It
implies that $w_{l}=(u-l)^{\sigma }$ is subharmonic for $\sigma $ large
enough. Then from \cite{BiGaYa}, see also Lemma \ref{subha} below, and since $u$ is
superharmonic, 
\begin{equation*}
\sup_{B_{R}}w_{l}\leq C\left(\frac{1}{|B_{2R}|}\int_{B_{2R}}w^{\frac{1}{\sigma }%
}\right)^{\sigma }=C\left(\frac{1}{|B_{2R}|}\int_{B_{2R}}(u-l)\right)^{\sigma }\leq C^{\prime
}(\inf_{B_{R}}u-l)^{\sigma }.
\end{equation*}%
Since $C'$ is independent of $R$, it follows that $\sup_{\mathbb{R}^{N}}w_{l}=0$, thus $u\equiv l.$

Next we consider a solution in an exterior domain and we replace 
 (\ref{vzero}) by a more precise comparison estimate between $v(x)$
and its infimum on a sphere of radius $\left\vert x\right\vert ,$ and use
the fact that this infimum is bounded as $r\rightarrow \infty .$ Then we can
show that $u$ is still bounded, and obtain the behaviour near $\infty $ by a
careful study of $u$ and $w_{l}.$ Finally we study the behaviour in $%
B_{r_0}\backslash \left\{ 0\right\} $ by the Bernstein technique, not relative
to $v$ but directly to $u,$ that means we take $b=1,$ so that $s=p$. From (\ref{thu})  the
function $\xi =\left\vert \nabla u\right\vert ^{2}$ satisfies 
\begin{equation*}
-\Delta \xi +C_{5}u^{2p}\xi ^{q}\leq C_{6}\frac{\xi ^{2}}{u^{2}}+C_{7}\frac{%
\left\vert \nabla z\right\vert ^{2}}{z},
\end{equation*}%
and $k=\inf_{B_{\frac {r_0}2}\backslash \left\{ 0\right\} }u$ is positive by the
strong maximum principle, thus 
\begin{equation*}
-\Delta \xi +C_{8}\xi ^{q}\leq C_{9}\xi ^{2}+C_{7}\frac{\left\vert \nabla
z\right\vert ^{2}}{z}\leq \frac{C_{8}}{2}\xi ^{q}+C_{11}+C_{7}\frac{%
\left\vert \nabla z\right\vert ^{2}}{z},
\end{equation*}%
from what we deduce the estimates of $\xi .$

\subsection{Some tools\label{tools}}

In the sequel we use the Bernstein method. In the case $p=0,$ it appeared
that the square of the gradient is a subsolution of an elliptic equation
with absorption, for which one can find estimates from above of Osserman's type.
In the case of equation (\ref{eqm}), the problem is more difficult, but such
upper estimates were also a main step in study of \cite{BiGaVe2} of equation
(\ref{eq2}) for $q<2.$ Here also they constitue a crucial step of our proofs
below. The following Lemma gives an Osserman's type property of such
equations, extending  of \cite
[Lemma 2.2]{BiGaVe2}, see also used in \cite[Proposition 2.1]{BiGaVe}.

\begin{lemma}
\label{oss}Let $\Omega $ be a domain of $\mathbb{R}^{N},$ and $z\in C(\Omega
)\cap C^{2}(G),$ where $G=\left\{ x\in \Omega :z(x)\neq 0\right\} .$ Let $%
w\mapsto \mathcal{A}w\mathcal{=}-\sum_{i,j=1}^{N}a_{ij}\frac{%
\partial ^{2}w}{\partial x_{i}\partial x_{j}}$ be a uniformly elliptic
operator in the open set $G:$ 
\begin{equation}
\theta \left\vert \xi \right\vert ^{2}\leq \sum_{i,j=1}^{N}a_{ij}\xi
_{i}\xi _{j}\leq \Theta \left\vert \xi \right\vert ^{2},\qquad \theta >0.
\label{kli}
\end{equation}%
Suppose that for any $x\subset G,$ 
\begin{equation*}
\mathcal{A}(z)+\alpha (x)z^{k}\leq \beta (x)+d\frac{\left\vert \nabla
z\right\vert ^{2}}{z},
\end{equation*}%
with $k>1,$ and $d=d(N,p,q),$ and $\alpha ,\beta $ are continuous in $\Omega 
$ and $\alpha $ is positive. Then there exists $c=c(N,p,q,k)>0$ such that for any ball $\overline{B}(x_{0},\rho
)\subset \Omega $  there holds
\begin{equation*}
z(x_{0})\leq c\left(\frac{1}{\rho ^{2}}\max_{B_\rho (x_{0})}\frac{1}{\alpha }\right)^{%
\frac{1}{k-1}}+\left(\max_{B_\rho (x_{0})}\frac{\beta }{\alpha }\right)^{\frac{1}{k}}.
\end{equation*}
\end{lemma}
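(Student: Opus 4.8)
The plan is to establish the estimate by a local comparison argument: construct an explicit radial supersolution $\zeta$ on a ball $B(x_0,\rho)$ that blows up on the boundary, and compare $z$ with it on the set where $z$ is positive. First I would reduce to the case $x_0=0$ and, by a scaling $x\mapsto \rho x$, essentially reduce to $\rho=1$, keeping careful track of how the constants $\alpha,\beta$ transform (the $\rho^{-2}$ in the Laplacian is what produces the $\rho^{-2/(k-1)}$ factor). Set $M=\max_{B_\rho}\alpha^{-1}$ and $K=\max_{B_\rho}(\beta/\alpha)$; the goal is a pointwise bound at the center by $c(M/\rho^2)^{1/(k-1)}+c K^{1/k}$. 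I would look for a supersolution of the form
\begin{equation*}
\zeta(x)=A\,(\rho^2-|x-x_0|^2)^{-\mu}+B,
\end{equation*}
with $\mu=\mu(N,k,d)>0$ chosen appropriately, $B=c K^{1/k}$, and $A$ proportional to $(M\rho^{2\mu-2})^{1/(k-1)}$ up to a constant depending only on $N,k,d$.

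The key computation is to verify that $\mathcal{A}\zeta + \alpha \zeta^k \ge \beta + d|\nabla\zeta|^2/\zeta$ on $G\cap B(x_0,\rho)$. Writing $r=|x-x_0|$ and $\phi(r)=(\rho^2-r^2)^{-\mu}$, uniform ellipticity (\ref{kli}) gives a lower bound for $\mathcal{A}\zeta$ in terms of $\phi''$, $\phi'/r$ with coefficients between $\theta$ and $N\Theta$; all three of $\mathcal{A}\zeta$, $\alpha\zeta^k$, and $d|\nabla\zeta|^2/\zeta$ are, to leading order near the boundary, of the form (constant)$\times A^\# (\rho^2-r^2)^{-(\text{power})}$. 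The dominant singular term on the left is $\alpha\zeta^k \ge M^{-1}(A\phi)^k$, which scales like $(\rho^2-r^2)^{-\mu k}$; the competing terms $|\mathcal{A}\zeta|$ and $d|\nabla\zeta|^2/\zeta$ scale like $(\rho^2-r^2)^{-\mu-2}$ and $(\rho^2-r^2)^{-\mu-2+...}$ respectively. Since $k>1$, choosing $\mu=2/(k-1)$ makes $\mu k=\mu+2$, so all singular terms match order, and then one picks $A$ large enough (in terms of $N,\Theta,\theta,k,d,M$) so that the coefficient of the left-hand singular term beats the sum of the others; the constant term $B$ absorbs the $\beta$ contribution via $\alpha B^k \ge M^{-1}(cK^{1/k})^k \ge \beta$ for $c$ large. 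I would also need $d|\nabla\zeta|^2/\zeta$ to behave well: since $\zeta\ge B>0$ and $|\nabla\zeta|\lesssim A\,r\,(\rho^2-r^2)^{-\mu-1}$, this term is controlled by the same singular power, possibly with a worse constant, which is why $d$ enters the final $c$.

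Then the comparison argument: on $G_\rho := \{x\in B(x_0,\rho): z(x)>0\}$ the function $v=z-\zeta$ is $C^2$ and satisfies $\mathcal{A}v \le \beta + d|\nabla z|^2/z - \alpha z^k - \mathcal{A}\zeta$; one wants to conclude $v\le 0$. The subtlety is the first-order term $d|\nabla z|^2/z$, which is not in standard linear form, so I would argue at an interior maximum point of $v$ on $\overline{G_\rho}$: if $v$ attains a positive maximum at some $\bar x$, then $\bar x\in G_\rho$ (since $v\to -\infty$ near $\partial B(x_0,\rho)$ and $v\le 0$ where $z=0\le\zeta$), at $\bar x$ we have $\nabla z=\nabla\zeta$ and $\mathcal{A}v\ge 0$, hence $\alpha z^k \le \beta + d|\nabla\zeta|^2/\zeta - \mathcal{A}\zeta \le \alpha\zeta^k$ by the supersolution property, forcing $z(\bar x)\le\zeta(\bar x)$, i.e. $v(\bar x)\le0$, a contradiction. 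Therefore $z\le\zeta$ on $B(x_0,\rho)$, and evaluating at the center gives $z(x_0)\le \zeta(x_0)=A\rho^{-2\mu}+B \le c(M/\rho^2)^{1/(k-1)}+cK^{1/k}$ with $c=c(N,\theta,\Theta,k,d)$; since $\theta,\Theta,d$ depend only on $N,p,q$ in the application, this is $c(N,p,q,k)$. The main obstacle is handling the gradient term $d|\nabla z|^2/z$ cleanly — the maximum-principle formulation above sidesteps the need for a linear comparison principle, but one must be careful that $\zeta$ is genuinely a supersolution of the full nonlinear inequality including that term, which is exactly why $\mu=2/(k-1)$ and the scaling of $A$ are forced, and why the constant must be allowed to depend on $d$.
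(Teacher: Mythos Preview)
Your approach is essentially identical to the paper's: the same supersolution $\zeta(x)=A(\rho^2-|x-x_0|^2)^{-2/(k-1)}+B$, the same choice of exponent forced by matching singular orders, and the same maximum-principle argument at an interior maximum of $z-\zeta$ (using $\nabla z=\nabla\zeta$ and $z>\zeta$ there to handle the $d|\nabla z|^2/z$ term). The only cosmetic differences are that the paper works directly on $B(x_0,\rho)$ without first scaling to $\rho=1$, and it writes the supersolution verification as a single chain of explicit inequalities rather than a scaling heuristic.
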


\noindent\begin{proof}
Let $\overline{B}(x_{0},\rho )\subset \Omega .$ We can assume that $%
z(x_{0})\neq 0.$ Let $r=\left\vert x-x_{0}\right\vert .$ Let $w$ be the function defined in $B_\rho (x_{0})$  by 
\begin{equation*}
w(x)=\lambda (\rho ^{2}-r^{2})^{-\frac{2}{k-1}}+\mu,
\end{equation*}%
where $\lambda ,\mu >0.$ Let $G_{1}$ be a connected component of $%
\left\{ x\in B_\rho (x_{0});z(x)>w(x)\right\} .$ Then $G_{1}\subset G$ and $%
\overline{G_{1}}\subset \overline{B}(x_{0},\rho )\subset G.$ We define $\mathcal{L}w$
in $B_\rho (x_{0})$ by
\begin{equation*}
\mathcal{L}(w)=\mathcal{A}(w)+\alpha (x)w^{k}-\beta (x)-d\frac{\left\vert
\nabla w\right\vert ^{2}}{w}.
\end{equation*}%
Then
\begin{equation*}
w_{x_{i}}=\frac{4\lambda }{k-1}(\rho ^{2}-r^{2})^{-\frac{2}{k-1}-1}x_{i},
\end{equation*}%
\begin{equation*}
w_{x_{i}x_{j}}=\frac{4\lambda }{k-1}(\rho ^{2}-r^{2})^{-\frac{2}{k-1}%
-1}\delta _{ij}+\frac{4\lambda (k+1)}{(k-1)^{2}}(\rho ^{2}-r^{2})^{-\frac{2}{%
k-1}-2}x_{i}x_{j},
\end{equation*}%
\begin{eqnarray*}
\mathcal{A}(w) &=&-\sum_{i,j=1}^{N}a_{ij}w_{x_{i}x_{j}}=\frac{%
4\lambda }{k-1}(\rho ^{2}-r^{2})^{-\frac{2}{k-1}-1}(-\sum%
_{i,j=1}^{N}a_{ij}\delta _{ij}) \\
&&+\frac{4\lambda (k+1)}{(k-1)^{2}}(\rho ^{2}-r^{2})^{-\frac{2}{k-1}%
-2}(-\sum_{i,j=1}^{N}a_{ij}x_{i}x_{j}) \\
&\geq &-\Theta (\frac{4\lambda N}{k-1}(\rho ^{2}-r^{2})^{-\frac{2}{k-1}-1}+%
\frac{4\lambda (k+1)}{(k-1)^{2}}(\rho ^{2}-r^{2})^{-\frac{2}{k-1}-2}r^{2} \\
&=&-\Theta (\frac{4\lambda N}{k-1}(\rho ^{2}-r^{2})^{-\frac{2}{k-1}%
-2}(N(\rho ^{2}-r^{2})+\frac{k+1}{k-1}r^{2}) \\
&=&-\Theta (\frac{4\lambda }{k-1}(\rho ^{2}-r^{2})^{-\frac{2}{k-1}-2}(N\rho
^{2}+(\frac{k+1}{k-1}-N)r^{2}),
\end{eqnarray*}%
\begin{equation*}
\left\vert \nabla w\right\vert ^{2}=\frac{16\lambda ^{2}}{(k-1)^{2}}(\rho
^{2}-r^{2})^{-\frac{4}{k-1}-2}r^{2}\Longrightarrow \frac{\left\vert \nabla w\right\vert ^{2}}{w}\leq \frac{16\lambda }{(k-1)^{2}%
}(\rho ^{2}-r^{2})^{-\frac{2}{k-1}-2}r^{2},
\end{equation*}%
and
\begin{equation*}
w^{k}=(\lambda (\rho ^{2}-r^{2})^{-\frac{2}{k-1}}+\mu )^{k}\geq \mu
^{k}+\lambda ^{k}(\rho ^{2}-r^{2})^{-\frac{2k}{k-1}}=\mu ^{k}+\lambda
^{k}(\rho ^{2}-r^{2})^{-\frac{2}{k-1}-2}.
\end{equation*}%
We deduce from this series of inequalities,
\begin{eqnarray*}
\mathcal{L}(w) &\geq &\alpha (x)\mu ^{k}-\beta (x)+\lambda (\rho
^{2}-r^{2})^{-\frac{2k}{k-1}}(\lambda ^{k-1}C(x) \\
&&-\Theta (\frac{4}{k-1}(N\rho ^{2}+(\frac{k+1}{k-1}-N)r^{2})-\frac{16dr^{2}%
}{(k-1)^{2}} \\
&\geq &\alpha (x)\mu ^{k}-\beta (x)+\lambda (\rho ^{2}-r^{2})^{-\frac{2k}{k-1%
}}(\lambda ^{k-1}C(x)-c^{\prime }\rho ^{2}),
\end{eqnarray*}%
where $c^{\prime }=\Theta (\frac{4}{k-1}(2N+\frac{k+1}{k-1})+\frac{16d}{%
(k-1)^{2}}=c^{\prime }(N,p,q,k).$ We deduce that $\mathcal{L}(w)\geq 0$ if we impose%
\begin{equation*}
\mu ^{k}\geq \max_{B_\rho (x_{0})}\frac{\beta }{\alpha }\;\text{ and }\;\lambda
^{k-1}\geq c^{\prime }\rho ^{2}\max_{B_\rho (x_{0})}\frac{1}{\alpha }.
\end{equation*}%
If $x_{1}\in G_{1}$ is such that $z(x_{1})-w(x_{1})=\max_{G_{1}}(z-w)>0,$
then $\nabla z(x_{1})=\nabla w(x_{1}),$ and $\mathcal{A}(z-w)(x_{1})\geq 0.$
Therefore
\begin{equation*}
0\geq \mathcal{L}(z-w)(x_{1}))=\mathcal{A}(z-w)(x_{1})+\alpha
(x)(z^{k}-w^{k})(x_{1})+d\left(\frac{\left\vert \nabla w\right\vert ^{2}}{w}-%
\frac{\left\vert \nabla z\right\vert ^{2}}{z}\right).
\end{equation*}%
Since the last term is positive, it is a contradiction. Then $z\leq w$ in $%
B_\rho (x_{0}).$ In particular $z(x_{0})\leq w(x_{0}).$
\end{proof}\medskip

We also use a bootstrap argument, initialy used in \cite[Lemma 2.2]{BiGr1}, and
then in \cite{BiGaYa} in more general form. 

\begin{lemma}
\label{boot} Let $d,h\in \mathbb{R}$ with $d\in \left( 0,1\right) $ and $y$ be
a positive nondecreasing function on some interval $(r_{1},\infty )$. 
Assume that there exist $K>0$ and $\varepsilon _{0}>0$ such
that, for any $\varepsilon \in \left( 0,\varepsilon _{0}\right] $ and $%
r>r_{1}$,%
\begin{equation*}
y(r)\leq K\varepsilon ^{-h}y^{d}(r(1+\varepsilon )).
\end{equation*}%
Then there exists $C=C(K,d,h,\varepsilon _{0})$ such that $%
\sup_{(r_{1},\infty )}y\leq C.$
\end{lemma}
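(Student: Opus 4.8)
The plan is to iterate the hypothesis along a geometric sequence of radii. Fix a base point $r_0>r_1$ and a fixed $\varepsilon\in(0,\varepsilon_0]$, say $\varepsilon=\varepsilon_0$; set $r_n=r_0(1+\varepsilon)^n$, so that $r_n\to\infty$. Writing $y_n=y(r_n)$ and applying the hypothesis repeatedly gives $y_n\le (K\varepsilon^{-h})\,y_{n+1}^{\,d}$. Iterating this $n$ times yields
\[
y_0 \le (K\varepsilon^{-h})^{1+d+\cdots+d^{n-1}}\, y_n^{\,d^{n}}
= (K\varepsilon^{-h})^{\frac{1-d^{n}}{1-d}}\, y_n^{\,d^{n}}.
\]
Since $d\in(0,1)$, the exponent $\frac{1-d^{n}}{1-d}$ is bounded by $\frac{1}{1-d}$, so the first factor is controlled by $(K\varepsilon^{-h})^{1/(1-d)}$ uniformly in $n$, provided $K\varepsilon^{-h}\ge 1$ (if it is $<1$, the factor is bounded by $1$; in either case we bound it by $\max\{1,K\varepsilon^{-h}\}^{1/(1-d)}$).

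The remaining issue is to show the factor $y_n^{\,d^{n}}$ does not blow up as $n\to\infty$. Here is where one must be slightly careful, since a priori $y_n$ could grow. The idea is a dichotomy: either $\limsup_n y_n^{\,d^n}$ is finite, in which case we are done, or $y_n$ grows fast enough that $y_n^{d^n}\to\infty$; but the latter is impossible because the very inequality we iterated, read forward, forces $y_{n+1}^{d^{n+1}}\ge (K\varepsilon^{-h})^{-d^{n+1}}y_n^{d^n}$, and $\prod_n (K\varepsilon^{-h})^{-d^{n+1}}$ converges to a positive constant. A cleaner route avoiding this: for each fixed $r>r_1$ and each $n$, choose $\varepsilon_n\in(0,\varepsilon_0]$ with $(1+\varepsilon_n)^n$ bounded — concretely take $\varepsilon_n$ so that $r(1+\varepsilon_n)^n = 2r$, i.e. $1+\varepsilon_n=2^{1/n}$, which lies in $(1,1+\varepsilon_0]$ for $n$ large. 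Then iterating the hypothesis with step $\varepsilon_n$ gives $y(r)\le (K\varepsilon_n^{-h})^{\frac{1-d^n}{1-d}}\,y(2r)^{d^n}$. Since $\varepsilon_n=2^{1/n}-1\sim \tfrac{\ln 2}{n}$, we have $\varepsilon_n^{-h}\le C n^{|h|}$, and $n^{|h|\cdot\frac{1-d^n}{1-d}}$ stays bounded because $d^n\to 0$ makes even $n^{d^n}\to 1$; meanwhile $y(2r)^{d^n}\to 1$ as $n\to\infty$ since $y(2r)$ is a fixed finite positive number. Letting $n\to\infty$ gives $y(r)\le (K')^{\frac{1}{1-d}}$ for a constant $K'=K'(K,d,h,\varepsilon_0)$ independent of $r$, which is the claimed bound $\sup_{(r_1,\infty)}y\le C$.

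The main obstacle is purely this convergence bookkeeping: one must pick the sequence of step sizes $\varepsilon_n$ so that simultaneously (i) the accumulated radius $r\prod(1+\varepsilon_k)$ stays bounded (so $y$ evaluated there stays bounded, using monotonicity of $y$), and (ii) the accumulated constant $\prod (K\varepsilon_k^{-h})^{d^k}$ stays bounded despite $\varepsilon_k\to 0$. The geometric decay $d^k$ with $d<1$ is exactly what makes both products converge — $\sum d^k<\infty$ kills the logarithmic-type divergence of $\sum d^k\ln(1/\varepsilon_k)$ — and monotonicity of $y$ is used to replace $y$ at the (bounded) terminal radius by $y(2r)$. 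No regularity of $y$ beyond monotonicity is needed.
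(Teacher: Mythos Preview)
Your ``cleaner route'' contains a genuine error. You iterate the hypothesis $n$ times with the \emph{same} step $\varepsilon_n=2^{1/n}-1$ at every step, obtaining
\[
y(r)\le (K\varepsilon_n^{-h})^{\frac{1-d^n}{1-d}}\,y(2r)^{d^n},
\]
and then assert that $n^{|h|\cdot\frac{1-d^n}{1-d}}$ stays bounded. This is false: the exponent $\frac{1-d^n}{1-d}$ tends to the positive constant $\frac{1}{1-d}$, so $n^{|h|\cdot\frac{1-d^n}{1-d}}\to\infty$ whenever $h\neq 0$. The factor $y(2r)^{d^n}\to 1$ is fine, but the constant in front blows up, and the argument gives nothing. (Your first attempt with fixed $\varepsilon$ and the dichotomy on $y_n^{d^n}$ also does not work: the inequality $y_{n+1}^{d^{n+1}}\ge (K\varepsilon^{-h})^{-d^n}y_n^{d^n}$ is only a \emph{lower} bound and cannot rule out $y_n^{d^n}\to\infty$.)

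The fix is exactly what your final paragraph gestures at but does not carry out: use a \emph{different} $\varepsilon_k$ at step $k$, chosen summable so that $\prod_k(1+\varepsilon_k)$ converges. The paper takes $\varepsilon_k=\varepsilon_0 2^{-k}$. Then the iterated bound reads
\[
y(r)\le K^{\sum_{k=0}^{m-1}d^k}\Bigl(\prod_{k=1}^{m}\varepsilon_k^{-h d^{k-1}}\Bigr)\,y(P_m r)^{d^m},\qquad P_m=\prod_{k=1}^m(1+\varepsilon_k),
\]
and the crucial point is that $\varepsilon_k^{-h}$ now carries the exponent $d^{k-1}$, so the product is controlled by $\sum_k k\,d^{k-1}<\infty$. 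With the same $\varepsilon_n$ at every step, the first factor $\varepsilon_n^{-h}$ carries exponent $1$ (not $d^{n-1}$), and that is precisely why your constant diverges. Once you make this change, $P_m\to P<\infty$, monotonicity gives $y(P_m r)\le y(Pr+1)$ bounded, and $(y(P_m r))^{d^m}\to 1$, yielding the uniform bound.
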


\noindent\begin{proof}
Consider the sequence $\{\varepsilon _{n}\}=\{\varepsilon _{0}2^{-n}\}_{n\geq 1}$. Since the series $\sum \varepsilon _{n}$ is
convergent,   the sequence
$\{P_m\}:=\{\prod_{i=1}^{m}(1+\varepsilon _{i})\}_{m\geq 1}$ is
convergent too, with limit $P>0$. Then there holds for any $r>r_{1}$ 
\begin{equation*}
y(r)\leq K\varepsilon _{1}^{-h}y^{d}(r(1+\varepsilon _{1}))=K\varepsilon
_{1}^{-h}y^{d}(rP_{1}).
\end{equation*}%
We deduce by induction, 
\begin{equation*}
y(r)\leq K^{1+d+..+d^{m}}(\varepsilon _{1}^{-h}\varepsilon
_{2}^{-hd}..\varepsilon _{m}^{-hd^{m-1}})y^{d^{m}}(P_{m}r)=(K\varepsilon
_{0}^{-h})^{1+d+..+d^{m}}(2^{h(1+2d+..+md^{m-1})})y^{d^{m}}(P_{m}r),
\end{equation*}%
and $rP_{m}\rightarrow rP ,$ $d^{m}\rightarrow 0,$ thus $%
\left(y(P_{m}r)\right)^{d^{m}}\rightarrow 1.$ Therefore we deduce that for any $r>r_{1}$, 
\begin{equation*}
y(r)\leq (K\varepsilon _{0}^{-h})^{\sum_{m=1}^{\infty
}d^{m}}2^{\sum_{m=1}^{\infty }md^{m-1}}=(K\varepsilon _{0}^{-h})^{\frac{1}{1-d}}2^{\frac{d}{(1-d)^2}}.
\end{equation*}
\end{proof}\smallskip

We also mention below a property of $m$-subharmonic functions given in \cite[Lemma 2.1]{BiGaYa}.
It's proof is also based  upon a boostrap method and is valid for more general
quasilinear operators:

\begin{lemma} 
\label{subha}Let $u\in W_{loc}^{1,m}(\Omega )$ be nonnegative, $m$-subharmonic function  
in a domain $\Omega $ of $\mathbb{R}^{N}.$ Then for any $\tau >0,$
there exists a constant $C=C(N,m,\tau )>0$ such that for any ball $%
B_{2\rho}(x_{0})\subset \Omega $ and any $\varepsilon \in \left( 0,\frac{1}{2}%
\right] ,$%
\begin{equation*}
\sup_{B_{\rho}(x_{0})}u\leq C\varepsilon ^{-\frac{Nm^{2}}{\tau ^{2}}}\left(\frac{1}{%
\left\vert B_{(1+\varepsilon)\rho}(x_{0})\right\vert }\int_{B_{(1+\varepsilon)\rho}(x_{0})}u^{\tau }\right)^{\frac{1}{\tau }}.
\end{equation*}

\end{lemma}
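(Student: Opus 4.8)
Recall that $u$ being $m$-subharmonic in $\Omega$ means $u\in W^{1,m}_{loc}(\Omega)$, $u\ge0$, and $\int_\Omega|\nabla u|^{m-2}\nabla u\cdot\nabla\varphi\le0$ for every $\varphi\in C^\infty_c(\Omega)$, $\varphi\ge0$; thus $u$ is a weak subsolution of the $m$-Laplacian and the estimate is the classical De Giorgi--Nash--Moser local boundedness, the only point being the explicit dependence of the constant on $\varepsilon$ and $\tau$. The plan is a Moser iteration, which is precisely the bootstrap device of Lemma~\ref{boot} carried out on the exponents of the iteration rather than on a radius, complemented by an interpolation step to reach small values of $\tau$.

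The starting point is a Caccioppoli inequality. Testing the subsolution inequality against $\varphi=\eta^m u^{\beta}$ with $\beta\ge1$ and a cut-off $\eta$ (after the customary truncation $u\rightsquigarrow\min(u,\ell)$, which makes $\varphi$ admissible, and a passage to the limit $\ell\to\infty$), expanding $\nabla\varphi$ and absorbing the mixed term by Young's inequality, one obtains, with $\kappa:=\beta+m-1\ge m$,
\[
\int_\Omega\eta^m\bigl|\nabla u^{\kappa/m}\bigr|^m\le C(m)\,\kappa^{a}\int_\Omega|\nabla\eta|^m\,u^{\kappa},\qquad a=a(m).
\]
Since $m<N$, the Sobolev inequality $\|\eta w\|_{L^{m\chi}}^m\le C\|\nabla(\eta w)\|_{L^m}^m$ with $\chi:=N/(N-m)>1$, applied to $w=u^{\kappa/m}$ with $\eta\equiv1$ on $B_{r'}$, $\operatorname{supp}\eta\subset B_{r}$ and $|\nabla\eta|\le2/(r-r')$, turns this into the reverse--H\"older step
\[
\Bigl(\textstyle\int_{B_{r'}}u^{\kappa\chi}\Bigr)^{1/(\kappa\chi)}\le\Bigl(\dfrac{C\kappa^{a}}{(r-r')^{m}}\Bigr)^{1/\kappa}\Bigl(\textstyle\int_{B_{r}}u^{\kappa}\Bigr)^{1/\kappa}.
\]

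For $\tau\ge m$ we iterate this from $\kappa_0=\tau$, with $\kappa_j=\tau\chi^{j}$ and radii $r_j=\rho(1+\varepsilon 2^{-j})$, so that $r_0=(1+\varepsilon)\rho$, $r_j\downarrow\rho$ and $\bigl(\int_{B_{r_j}}u^{\kappa_j}\bigr)^{1/\kappa_j}\to\sup_{B_\rho}u$ as $j\to\infty$. The telescoped constant is governed by the sum
\[
\sum_{j\ge0}\frac{m}{\kappa_j}=\frac{m}{\tau}\cdot\frac{\chi}{\chi-1}=\frac{m}{\tau}\cdot\frac{N}{m}=\frac{N}{\tau},
\]
the remaining series $\sum_j\kappa_j^{-1}\log(C\kappa_j^{a})$ and $\sum_j j\,\kappa_j^{-1}$ being convergent; extracting the volume factor $|B_{(1+\varepsilon)\rho}|^{1/\tau}\le C\rho^{N/\tau}$ one arrives at $\sup_{B_\rho}u\le C(N,m,\tau)\,\varepsilon^{-N/\tau}\bigl(|B_{(1+\varepsilon)\rho}|^{-1}\int_{B_{(1+\varepsilon)\rho}}u^\tau\bigr)^{1/\tau}$. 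For $0<\tau<m$ one first applies this estimate with exponent $m$ between two intermediate radii $\rho\le s<t\le(1+\varepsilon)\rho$, then uses $\int_{B_t}u^m\le(\sup_{B_t}u)^{m-\tau}\int_{B_t}u^\tau$ and Young's inequality to get $\sup_{B_s}u\le\tfrac12\sup_{B_t}u+C(\tfrac{t}{t-s})^{N/\tau}\bigl(|B_{(1+\varepsilon)\rho}|^{-1}\int_{B_{(1+\varepsilon)\rho}}u^\tau\bigr)^{1/\tau}$, and a bounded--interval iteration in the spirit of Lemma~\ref{boot} then yields the same conclusion, now for every $\tau>0$, with $\varepsilon$--exponent $N/\tau$.

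Finally, since $0<\varepsilon\le\tfrac12$ the function $\sigma\mapsto\varepsilon^{-\sigma}$ is nondecreasing and $N/\tau\le Nm^{2}/\tau^{2}$ whenever $\tau\le m^{2}$; this gives the inequality exactly as stated in the range of exponents used in the sequel (where $\tau=1/\sigma$ with $\sigma$ large), while for larger $\tau$ one reduces to a fixed exponent $\le m^2$ by Jensen's inequality $\bigl(|B|^{-1}\int_B u^{\tau_0}\bigr)^{1/\tau_0}\le\bigl(|B|^{-1}\int_B u^{\tau}\bigr)^{1/\tau}$. The analytic content -- Caccioppoli plus Sobolev plus Moser iteration -- is entirely classical; the delicate point, and the place where care is really needed, is the bookkeeping of the constants through the infinite iteration so as to isolate the precise power of $\varepsilon$ (and, for $\tau<m$, to arrange the interpolation so that this power is again $N/\tau$ and not worse), the possible vanishing of $u$ forcing only the routine truncations in the weak formulation.
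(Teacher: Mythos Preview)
Your proposal is sound. The paper does not actually give its own proof of this lemma: it simply cites \cite[Lemma 2.1]{BiGaYa} and remarks that the argument there ``is also based upon a bootstrap method''. Your Moser iteration --- Caccioppoli plus Sobolev plus geometric iteration of exponents, followed by the standard interpolation to reach small $\tau$ --- is exactly that bootstrap, and in fact yields the sharper power $\varepsilon^{-N/\tau}$ rather than the stated $\varepsilon^{-Nm^{2}/\tau^{2}}$.

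One point worth flagging explicitly: your final paragraph is honest that the passage from $\varepsilon^{-N/\tau}$ to $\varepsilon^{-Nm^{2}/\tau^{2}}$ only goes the right way when $\tau\le m^{2}$, and that the Jensen reduction you propose for $\tau>m^{2}$ does not recover the stated exponent (it gives a \emph{larger} power of $\varepsilon^{-1}$, not a smaller one). This is a genuine discrepancy with the statement as written, but as you correctly observe, every application in the paper takes $\tau=\theta/\sigma$ with $\sigma$ large, so $\tau$ is small and the issue is moot. You might simply say that you prove the inequality with the better exponent $N/\tau$, from which the stated form follows for $\tau\le m^{2}$; this is cleaner than invoking a Jensen step that does not quite close.
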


Finally we use some simple properties of mean value on spheres of $m$-superharmonic functions, in the same spirit as the ones given in \cite[Lemmas
3.7, 3.8, 3.9]{AmSi} for mean values on annulus, and in \cite{BuGMQu} for $m=2.$
For the sake of completeness we recall their proofs.

\begin{lemma}
\label{mude}Let $u\in C^1(\Omega)$ be nonnegative, $m$-superharmonic in $\Omega .$\smallskip

\noindent (i) If $\Omega =$ $\mathbb{R}^{N}\backslash B_{r_{0}}$, then 
\begin{equation*}
r\mapsto \mu (r): =\inf_{\left\vert x\right\vert =r}u,
\end{equation*}
is bounded in $(r_{0},\infty ),$ and strictly monotone or constant for large 
$r$. \smallskip

\noindent (ii) If $\Omega =B_{r_{0}}\backslash \left\{ 0\right\} ,$ then $r\mapsto\mu (r)$ is
nonincreasing in $(0,r_{0})$.
\end{lemma}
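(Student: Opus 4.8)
The plan is to exploit the classical fact that for an $m$-superharmonic function the quantity $\mu(r)=\inf_{|x|=r}u$ behaves essentially like the corresponding radial $m$-superharmonic profile, i.e. like an affine function of $r^{\frac{m-N}{m-1}}$, via a comparison argument on annuli together with the minimum principle. For part (ii), fix $0<r_1<r_2<r_0$ and consider the $m$-harmonic function $h$ on the annulus $\{r_1<|x|<r_2\}$ with boundary values $h=\mu(r_1)$ on $|x|=r_1$ and $h=\mu(r_2)$ on $|x|=r_2$; this $h$ is radial and equal to $A+B|x|^{\frac{m-N}{m-1}}$ for suitable constants. Since $u$ is $m$-superharmonic and $u\ge h$ on the boundary of the annulus, the comparison principle for the $m$-Laplacian gives $u\ge h$ inside, hence $\mu(r)\ge h(r)$ for $r\in(r_1,r_2)$. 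Taking $r_1\to 0$ (so that the coefficient of $|x|^{\frac{m-N}{m-1}}$, which blows up, must have the right sign) forces $\mu(r_1)$ to dominate, and one reads off that $\mu$ is nonincreasing; more directly, applying the same comparison on $\{r_1<|x|<r_0\}$ with the boundary data $\mu(r_1)$ and $\mu(r_2)$ at an intermediate radius $r_2$ shows $\mu(r_2)\ge$ the value at $r_2$ of the radial profile joining $\mu(r_1)$ and $\mu(r_0)$, and monotonicity of that profile in the exponent $\frac{m-N}{m-1}<0$ yields $\mu(r_1)\ge\mu(r_2)$.

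For part (i), first establish boundedness: pick $r_1>r_0$ and, for $R>r_1$, compare $u$ on the annulus $\{r_1<|x|<R\}$ with the radial $m$-harmonic function taking values $\min_{|x|=r_1}u$ on the inner sphere and $0$ on the outer sphere; letting $R\to\infty$, the radial profile converges locally uniformly to the constant $\min_{|x|=r_1}u$, so $u\ge$ (a positive constant) is not what we want — rather, to get an \emph{upper} bound on $\mu$ we instead use superharmonicity the other way: the radial profile $P_R(r)=a_R+b_R r^{\frac{m-N}{m-1}}$ interpolating the values $M_{r_1}:=\max_{|x|=r_1}u$ at $r_1$ and $\mu(R)$ at $R$ satisfies, by comparison on annuli and the minimum principle applied on $\{r_1<|x|<R\}$ with boundary data $M_{r_1}$ and $\mu(R)$, the bound $\mu(r)\le P_R(r)$ for $r_1<r<R$ when $\mu$ is compared appropriately; passing to the limit $R\to\infty$ gives that $\mu$ is bounded above on $(r_1,\infty)$ by $M_{r_1}$, and boundedness below by $0$ is trivial from nonnegativity. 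Once $\mu$ is bounded, monotonicity for large $r$ follows from the same interpolation identity: for $r_0<s<r<R$ the comparison principle gives that the point $(r,\mu(r))$ lies below the graph of the radial profile through $(s,\mu(s))$ and $(R,\mu(R))$; letting $R\to\infty$ and using that $r^{\frac{m-N}{m-1}}\to 0$, one obtains $\mu(r)\le \mu(s)+o(1)$-type control that, after a standard argument, shows $\liminf_{r\to\infty}\mu(r)=\limsup_{r\to\infty}\mu(r)$ unless $\mu$ is eventually strictly monotone; more cleanly, the function $r\mapsto$ (the slope of the secant of $\mu$ against $r^{\frac{m-N}{m-1}}$) is monotone by the annulus comparison, which directly gives that $\mu$ is strictly monotone or constant for large $r$.

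The main obstacle is making the comparison principle rigorous in the generality stated: $u$ is only assumed $C^1$ and $m$-superharmonic in the weak (distributional / viscosity) sense, so I must invoke the weak comparison principle for the $m$-Laplacian between a weak supersolution and the explicit radial $m$-harmonic barrier on annuli — this is standard (it follows from the monotonicity of the $m$-Laplacian operator and is exactly the tool used in \cite{AmSi}, \cite{BuGMQu}) but needs to be cited carefully, and one must be slightly careful that $\inf_{|x|=r}u$ is attained and depends continuously on $r$, which is fine since $u\in C^1$. The rest is bookkeeping with the explicit radial profile $a+b\,r^{\frac{m-N}{m-1}}$ and the sign of its exponent.
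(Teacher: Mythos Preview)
Your overall strategy --- compare $u$ against radial $m$-harmonic barriers $a+b\,r^{\frac{m-N}{m-1}}$ on annuli --- is exactly the right one, and it is what the paper does. But your execution of the boundedness step in part (i) contains a genuine gap.

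You write that the profile $P_R$ interpolating $M_{r_1}=\max_{|x|=r_1}u$ at $r_1$ and $\mu(R)$ at $R$ satisfies $\mu(r)\le P_R(r)$ ``by comparison on annuli and the minimum principle''. This cannot be right: the comparison principle for an $m$-superharmonic $u$ against an $m$-harmonic $h$ reads \emph{$u\ge h$ on the boundary $\Rightarrow$ $u\ge h$ inside}. It only ever produces lower bounds for $u$. On your annulus you have $u\le M_{r_1}=P_R(r_1)$ on the inner sphere but $u\ge \mu(R)=P_R(R)$ on the outer one, so neither inequality holds uniformly on the boundary and no conclusion follows. There is no direct way to extract an \emph{upper} bound on $\mu(r)$ from superharmonicity by choosing a barrier above $u$.

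The paper's trick, which you are missing, is to turn a lower bound at a \emph{fixed} radius into an upper bound at a \emph{variable} radius. Concretely: for $r>r_0$ the function $f(x)=\mu(r)\bigl(1-(|x|/r_0)^{\frac{m-N}{m-1}}\bigr)$ is $m$-harmonic in the annulus $\{r_0<|x|<r\}$, vanishes on $|x|=r_0$ and equals $\mu(r)(1-(r/r_0)^{\frac{m-N}{m-1}})\le \mu(r)$ on $|x|=r$. Hence $u\ge f$ there. Now fix $k>1$ with $1-k^{\frac{m-N}{m-1}}\ge \tfrac12$ and evaluate at $|x|=kr_0$: for every $r>kr_0$,
\[
\mu(kr_0)\;\ge\; f(kr_0)\;=\;\mu(r)\bigl(1-k^{\frac{m-N}{m-1}}\bigr)\;\ge\;\tfrac12\,\mu(r),
\]
so $\mu(r)\le 2\mu(kr_0)$ for all large $r$. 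That is the boundedness.

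Two smaller comments. In part (ii) your second argument (``the radial profile joining $\mu(r_1)$ and $\mu(r_0)$\ldots monotonicity of that profile\ldots yields $\mu(r_1)\ge\mu(r_2)$'') is circular: the profile is monotone in the desired direction only if you already know $\mu(r_1)\ge\mu(r_0)$. Your first idea is the right one: on $\{\varepsilon<|x|<r_1\}$ use the barrier with boundary values $0$ at $|x|=\varepsilon$ (legitimate since $u\ge 0$) and $\mu(r_1)$ at $|x|=r_1$, then let $\varepsilon\to 0$; the barrier converges to the constant $\mu(r_1)$, giving $\mu(r)\ge\mu(r_1)$ for $r<r_1$. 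This is exactly the paper's argument with $h(x)=\mu(r_1)-\delta|x|^{\frac{m-N}{m-1}}$.

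Finally, your concavity-of-$\mu$-in-$s=r^{\frac{m-N}{m-1}}$ observation does yield eventual monotonicity, but the sharper dichotomy ``strictly monotone or constant'' needs an extra word: a concave function may be constant on a subinterval. The paper handles this by noting that if $\mu$ is constant on some interval of radii then $u$ attains an interior minimum on the corresponding annulus, and Vazquez's strong maximum principle forces $u$ to be constant there; propagating this shows $u$ (hence $\mu$) is globally constant.
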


\noindent\begin{proof}
(i) Let $r>r_{0}$ be fixed. The function $f(x)=\mu (r)(1-(\frac{\left\vert
x\right\vert }{r_{0}})^{\frac{m-N}{m-1}})$ is $m$-harmonic, and $u\geq f$ on 
$\partial B_{r}\cup \partial B_{r_{0}}$ , therefore $u\geq f$ in $\overline{%
B_{r}}\backslash B_{r_{0}}.$ Let $k>0$ large enough such that $1-k^{\frac{%
p-N}{p-1}}\geq \frac{1}{2}$. If we take $r>kr_{0}$ and any $x$ such that $%
\left\vert x\right\vert =kr_{0}$ we obtain 
\begin{equation*}
u(x)\geq \mu (kr_{0})\geq f(x)=\mu (r)(1-k^{\frac{p-N}{p-1}})\geq \frac{1}{2}\mu (r),
\end{equation*}%
so $\mu (r)$ is bounded for $r>kr_{0}.$ For any $r_{2}>r_{1}>r_{0},$ $%
\varphi (r_{1},r_{2}):=\inf_{\overline{B_{r_{2}}}\backslash B_{1}}u=\min
(\mu (r_{1}),\mu (r_{2}))$ from the maximum principle. Then $\varphi $
is nonincreasing in $r_{2}$ and nondecreasing in $r_{1}.$ If $\mu $ has a
strict local minimum at some point $r,$ then for $0<\delta <\delta _{0}$
small enough, $\mu (r)<$ $\varphi (r-\delta _{0},r+\delta _{0})\leq \varphi
(r-\delta ,r+\delta ),$ which yields a contradiction as $\delta \rightarrow 0$. Then $\mu $ is monotone.
If it is constant on two intervals $(a,b)$ and $(a',b')$ with $b<a'$ and non-constant on $(b,a')$ it follows by Vazquez's maximum principle \cite{Vaz} that
$u$ is constant on $\overline B_b\setminus  B_a$ and on  $\overline B_{b'}\setminus  B_{a'}$ but non constant on $B_{a'}\setminus \overline B_b$. It means, always by Vazquez's maximum principle, \smallskip

\noindent - either $\min \{\mu (r):a<r<b'\}=\mu (a)$ (if $\mu$ is nondecreasing) and the minimum of $u$ in $\overline B_{b'}\setminus  B_a$ is achieved 
in any point in $B_b\setminus  \overline B_a$, hence $u$ is constant in $\overline B_{b'}\setminus  B_a$,\smallskip

\noindent - or $\min \{\mu (r):a<r<b'\}=\mu (a)$  (if $\mu$ is nonincreasing) and the minimum of $u$ in $\overline B_{b'}\setminus  B_a$ is achieved 
in any point in $B_{b'}\setminus  \overline B_{a'}$, hence $u$ is constant in $\overline B_{b'}\setminus  B_a$.\smallskip

\noindent In both case we obtain a contradiction. Hence $\mu$ is either strictly monotone for $r$ large enough, or it is constant, and so is $u$.\smallskip
 
(ii) For given $r_{1}<r_{0},$ and $\delta >0,$ there exists $\varepsilon
_{\delta }\leq r_1$ such that for $0<\varepsilon <\varepsilon _{\delta }$, such that 
$\delta \varepsilon ^{\frac{m-N}{m-1}}\geq \mu (r_{1}).$ Let $h(x)=\mu
(r_{1})-\delta \left\vert x\right\vert ^{\frac{m-N}{m-1}}.$ Then $u\geq h$
on $\partial B_{r_{1}}\cup \partial B_{\varepsilon },$ then $u\geq h$ in $%
\overline{B_{r_{1}}}\backslash B_{\varepsilon }$. Making $\varepsilon
\rightarrow 0$ and then $\delta \rightarrow 0,$ one gets $u\geq \mu (r_{1})$
in $B_{r_{1}}\backslash \left\{ 0\right\} ,$ thus $\mu (r)\geq \mu (r_{1})$
for $r<r_{1}.$
\end{proof}
\section{Proof of the main results}
\subsection{Proof of the Liouville property for $q>m$\label{Liou}}

We first give a general Bernstein estimate for solutions of equation (\ref%
{eqm}):

\begin{lemma}
\label{Bern} Let $u$ be any $C^{1}$ positive solution of ((\ref{eqm}) in a
domain $\Omega $, with $m>1$ and $p,q$   arbitrary real numbers. Let $%
G=\left\{ x\in \Omega :\left\vert \nabla u(x)\right\vert \neq 0\right\} .$
Let $u=v^{b}$ with $b\in \mathbb{R}\backslash \left\{ 0\right\} $ and $%
z=\left\vert \nabla v\right\vert ^{2}.$ Then the operator 
\begin{equation}
w\mapsto \mathcal{A}(w)=-\Delta w-(m-2)\frac{D^{2}w(\nabla v,\nabla v)}{%
\left\vert \nabla v\right\vert ^{2}}=-\sum_{i,j=1}^{N}a_{ij}v_{x_{i}x_{j}},  \label{ope}
\end{equation}%
with coefficients $a_{i,j}$ depending on $\nabla v,$ is uniformly elliptic
in $G,$ and for any $\varepsilon >0,$ there exists $C_{\varepsilon
}=C_{\varepsilon }(N,m,p,q,b,\varepsilon )$ such that 
\begin{equation}\label{oko}\begin{array}{lll}\displaystyle
-\frac{1}{2}\mathcal{A}(z)+\left(\frac{1-\varepsilon }{N}%
(b-1)^{2}(m-1)^{2}-(1-b)(m-1)\right)\frac{z^{2}}{v^{2}}
+\frac{1-2\varepsilon }{N}\left\vert b\right\vert
^{2(q-m+1)}v^{2s}z^{q+2-m}\\[4mm]
\displaystyle\phantom{----}
+\left(\frac{1}{N}2(b-1)(m-1)-s\right)\left\vert b\right\vert
^{q-m}bv^{s-1}z^{\frac{q+4-m}{2}}\leq C_{\varepsilon }\frac{\left\vert
\nabla z\right\vert ^{2}}{z}.  
\end{array}\end{equation}
\end{lemma}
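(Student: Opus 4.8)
The plan is to carry out the classical Bernstein computation for $z=|\nabla v|^2$ where $v=u^{1/b}$, following the Gidas--Spruck / Filippucci--Pucci--Souplet scheme but keeping track of the anisotropic operator $\mathcal{A}$ coming from the $m$-Laplacian. First I would rewrite equation (\ref{eqm}) in terms of $v$. Since $u=v^{b}$ one computes $\nabla u = b v^{b-1}\nabla v$ and $\Delta_m u = |\nabla u|^{m-2}\Delta u + (m-2)|\nabla u|^{m-4} D^2u(\nabla u,\nabla u)$; substituting and dividing through by $|\nabla u|^{m-2}=|b|^{m-2}v^{(b-1)(m-2)}|\nabla v|^{m-2}$ produces an equation of the form
\begin{equation*}
-\mathcal{A}(v) = (b-1)(m-1)\frac{|\nabla v|^{2}}{v} + |b|^{q-m}b\, v^{s}|\nabla v|^{q-m+2},
\end{equation*}
where $\mathcal{A}(v)=\Delta v + (m-2)\frac{D^2v(\nabla v,\nabla v)}{|\nabla v|^2}$ and $s=1-q+b(p+q-1)$ (matching the $m=2$ sketch, where $\mathcal{A}=-\Delta$). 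The uniform ellipticity of $\mathcal{A}$ on $G$ is immediate: the eigenvalues of $(a_{ij})$ are $1$ (multiplicity $N-1$) and $m-1$, so $\theta=\min(1,m-1)$ and $\Theta=\max(1,m-1)$.

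Next I would apply the Bochner-type identity adapted to $\mathcal{A}$. The key inequality is that for the operator $\mathcal{A}$ one has a ``curvature-free'' Bochner formula
\begin{equation*}
-\tfrac12 \mathcal{A}(z) \geq \langle \nabla(\mathcal{A}v),\nabla v\rangle + c_0\, (\text{quadratic form in } D^2v),
\end{equation*}
and the quadratic form is bounded below, via Cauchy--Schwarz applied carefully to the trace, by $\frac{1}{N}(\mathcal{A}v)^2$ modulo terms absorbed into $\frac{|\nabla z|^2}{z}$ (this is where the $\varepsilon$'s enter: one writes $|D^2v|^2 \geq \frac{1}{N}(\Delta v)^2$ and then trades $(\Delta v)^2$ against $(\mathcal{A}v)^2$ plus a multiple of $|\nabla z|^2/z$, losing an $\varepsilon$ each time). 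Then I substitute $\mathcal{A}v = -[(b-1)(m-1)\frac{z}{v} + |b|^{q-m}b v^{s}z^{(q-m+2)/2}]$ and differentiate this expression to evaluate $\langle \nabla(\mathcal{A}v),\nabla v\rangle$; the derivative of $\frac{z}{v}$ produces $\frac{\langle\nabla z,\nabla v\rangle}{v} - \frac{z^2}{v^2}$ and the derivative of $v^s z^{(q-m+2)/2}$ produces $s v^{s-1}z^{(q-m+4)/2} + \frac{q-m+2}{2}v^{s}z^{(q-m)/2}\langle\nabla z,\nabla v\rangle$. Squaring $\mathcal{A}v$ for the $\frac{1}{N}(\mathcal{A}v)^2$ term gives the three ``diagonal'' contributions: $\frac{1}{N}(b-1)^2(m-1)^2\frac{z^2}{v^2}$, $\frac{1}{N}|b|^{2(q-m+1)}v^{2s}z^{q-m+2}$, and the cross term $\frac{2}{N}(b-1)(m-1)|b|^{q-m}b\, v^{s-1}z^{(q-m+4)/2}$. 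Collecting everything, all terms containing $\langle \nabla z,\nabla v\rangle$ are bounded (by Cauchy--Schwarz, $|\langle\nabla z,\nabla v\rangle|\leq |\nabla z| z^{1/2}$) by $\frac{\delta}{2}\cdot(\text{coefficient})^2/(\text{the good coefficient})\cdot(\text{power of }z,v) + \frac{1}{2\delta}\frac{|\nabla z|^2}{z}$ and absorbed, which is exactly what produces the constant $C_\varepsilon(N,m,p,q,b,\varepsilon)$ on the right and the factor $(1-\varepsilon)$ and $(1-2\varepsilon)$ on the quadratic and gradient-power terms on the left.

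The main obstacle is the bookkeeping: making sure every term carrying $\langle\nabla z,\nabla v\rangle$ (there are several, coming from differentiating both pieces of $\mathcal{A}v$, and from the genuine $\frac{|\nabla z|^2}{z}$-type remainder in the anisotropic Bochner formula) gets absorbed into $C_\varepsilon \frac{|\nabla z|^2}{z}$ without eating into the coefficients of $\frac{z^2}{v^2}$, $v^{2s}z^{q-m+2}$ and $v^{s-1}z^{(q-m+4)/2}$ beyond the stated $\varepsilon$-losses, and ensuring the anisotropic Bochner term $(m-2)\frac{D^2z(\nabla v,\nabla v)}{|\nabla v|^2}$-type corrections are handled correctly (the $m-2$ can have either sign, so one must be careful that the Cauchy--Schwarz step giving $\frac{1}{N}(\mathcal{A}v)^2$ rather than $\frac{1}{N}(\Delta v)^2$ still goes through; this is the technical heart and is where the dimension-dependent constant $\frac{1}{N}$ and the precise form of $\mathcal{A}$ matter). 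I would organize the computation so that the ``clean'' identity is derived first in $G$ where everything is smooth, then the three signed quadratic/superlinear terms are displayed with their exact coefficients, and finally a single Young/Cauchy--Schwarz step collapses all the mixed terms into the right-hand side, yielding (\ref{oko}).
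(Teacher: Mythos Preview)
Your plan is essentially the same as the paper's: transform to $v$, use B\"ochner plus Cauchy--Schwarz to get $\tfrac{1}{N}(\Delta v)^2$, differentiate the equation for $v$, and absorb all mixed $\langle\nabla z,\nabla v\rangle$ terms into $C_\varepsilon\tfrac{|\nabla z|^2}{z}$ via Young. Two small corrections: the exponent should be $s=m-1-q+b(p+q-m+1)$ for general $m$ (you wrote the $m=2$ specialization), and in the paper the operator $\mathcal{A}(z)$ does \emph{not} come from an ``adapted B\"ochner formula'' but appears because $-\Delta v$ itself contains the anisotropic piece $\tfrac{m-2}{2}\tfrac{\langle\nabla z,\nabla v\rangle}{z}$; differentiating this piece inside $\langle\nabla(\Delta v),\nabla v\rangle$ produces $-\tfrac{m-2}{2}\tfrac{D^2z(\nabla v,\nabla v)}{z}$, which combines with $-\tfrac{1}{2}\Delta z$ to yield $-\tfrac{1}{2}\mathcal{A}(z)$, while the paper keeps $(\Delta v)^2$ (not $(\mathcal{A}v)^2$) throughout---your trade of one for the other is valid up to absorbable terms, so the end result is the same.
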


\noindent\begin{proof}
The following identities hold if $u=v^b$: $\nabla u=bv^{b-1}\nabla v,$%
\begin{equation*}
\left\vert \nabla u\right\vert ^{m-2}\nabla u=\left\vert b\right\vert
^{m-2}bv^{(b-1)(m-1)}\left\vert \nabla v\right\vert ^{m-2}\nabla v,
\end{equation*}%
\begin{equation*}
\Delta _{m}u=\left\vert b\right\vert ^{m-2}b(v^{(b-1)(m-1)}\Delta
_{m}v+(b-1)(m-1)v^{(b(m-1)-m}\left\vert \nabla v\right\vert ^{m}),
\end{equation*}%

\begin{equation*}
-v^{(b-1)(m-1)}\Delta _{m}v=(b-1)(m-1)v^{(b(m-1)-m}\left\vert \nabla
v\right\vert ^{m}+\left\vert b\right\vert ^{q}v^{bp+(b-1)q}\left\vert \nabla
v\right\vert ^{q},
\end{equation*}%
and finally
\begin{equation}
-\Delta _{m}v=(b-1)(m-1)\frac{\left\vert \nabla v\right\vert ^{m}}{v}%
+\left\vert b\right\vert ^{q-m}bv^{s}\left\vert \nabla v\right\vert ^{q},
\label{eqv}
\end{equation}%
with 
\begin{equation}
s=m-1-q+b(p+q-m+1).  \label{cos}
\end{equation}%
We set $z=\left\vert \nabla v\right\vert ^{2}.$ Then in $G,$ 
\begin{equation*}
-\Delta _{m}v=f\Longleftrightarrow -\Delta v-(m-2)\frac{D^{2}v(\nabla
v,\nabla v)}{\left\vert \nabla v\right\vert ^{2}}=f\left\vert \nabla
v\right\vert ^{2-m},
\end{equation*}%
from which identity we infer 
\begin{equation*}
-\Delta v=(m-2)\frac{D^{2}v(\nabla v,\nabla v)}{\left\vert \nabla
v\right\vert ^{2}}+(b-1)(m-1)\frac{\left\vert \nabla v\right\vert ^{2}}{v}%
+\left\vert b\right\vert ^{q-m}bv^{s}\left\vert \nabla v\right\vert ^{q+2-m}
\end{equation*}%
where 
\begin{equation*}
<Hess\, v(\nabla v),\nabla v>=D^{2}v(\nabla v,\nabla v)=\frac{1}{2}<\nabla z,\nabla v>.
\end{equation*}%
We recall the B\"{o}chner formula combined with Cauchy-Schwarz inequality, 
\begin{equation*}
-\frac{1}{2}\Delta z+\frac{1}{N}(\Delta v)^{2}+<\nabla (\Delta v),\nabla
v)\leq -\frac{1}{2}\Delta z+(Hess\,v)^{2}+<\nabla (\Delta v),\nabla v>=0.
\end{equation*}%
Since
\begin{equation*}
-\Delta v=\frac{m-2}{2}\frac{<\nabla z,\nabla v>}{z}+(b-1)(m-1)\frac{z}{v}%
+\left\vert b\right\vert ^{q-m}bv^{s}z^{\frac{q+2-m}{2}},
\end{equation*}%
we deduce
\begin{equation*}\begin{array}{lll}\displaystyle
<\nabla (\Delta v),\nabla v>=-\frac{m-2}{2}<\nabla \frac{<\nabla z,\nabla
v>}{z},\nabla v> +(1-b)(m-1) <\nabla \frac{z}{v},\nabla v>\\[3mm]
\phantom{<\nabla (\Delta v),\nabla v>}\displaystyle
-\left\vert b\right\vert
^{q-m}b(sv^{s-1}z^{\frac{q+4-m}{2}}+\frac{q+2-m}{2}v^{s}z^{\frac{q-m}{2}%
}<\nabla z,\nabla v>).
\end{array}\end{equation*}%
we observe that 
\begin{equation*}
<\nabla \frac{z}{v},\nabla v>=\frac{<\nabla z,\nabla v>}{v}-\frac{z^{2}}{%
v^{2}}\quad \text{and }\;\frac{<\nabla z,\nabla v>^{2}}{z^{2}}\leq \frac{\left\vert
\nabla z\right\vert ^{2}}{z},
\end{equation*}%
thus 
\begin{equation*}\begin{array}{lll}\displaystyle
-\frac{m-2}{2} <\nabla \frac{<\nabla z,\nabla v>}{z},\nabla v>=-\frac{m-2}{
2}\left(\frac{D^{2}z(\nabla v,\nabla v)}{z}+\frac{1}{2}\frac{\left\vert \nabla
z\right\vert ^{2}}{z}-\frac{<\nabla z,\nabla v>^{2}}{z^{2}}\right) \\[4mm]
\phantom{\displaystyle
-\frac{m-2}{2} <\nabla \frac{<\nabla z,\nabla v>}{z},\nabla v>}
\displaystyle
\geq -\frac{m-2}{2}\frac{D^{2}z(\nabla v,\nabla v)}{z}-\left\vert
m-2\right\vert \frac{\left\vert \nabla z\right\vert ^{2}}{z}.
\end{array}\end{equation*}%
We define the operator $\mathcal{A}$ by (\ref{ope}); it satisfies (\ref{kli}%
) with $\theta =\min (1,m-1)$ and $\Theta =\max (1,m-1),$ so it is uniformly
elliptic in $G.$ Therefore 
\begin{equation}\label{estiv}\begin{array}{lll}\displaystyle
-\frac{1}{2}\mathcal{A}(z)+\frac{1}{N}(\Delta v)^{2}-(1-b)(m-1)\frac{z^{2}%
}{v^{2}}-\left\vert b\right\vert ^{q-m}bsv^{s-1}z^{\frac{q+4-m}{2}}  
\\[3mm]\phantom{----}
\displaystyle
\leq (b-1)(m-1)\frac{<\nabla z,\nabla v>}{v}+\frac{(q+2-m)\left\vert
b\right\vert ^{q-m}b}{2}v^{s}z^{\frac{q-m}{2}}<\nabla z,\nabla v>\\[3mm]\phantom{----}
\displaystyle+\left\vert
m-2\right\vert \frac{\left\vert \nabla z\right\vert ^{2}}{z}. 
\end{array}\end{equation}%
For $\varepsilon >0$ there holds by H\"older's inequality, 
\begin{equation*}
\frac{q+2-m}{2}v^{s}z^{\frac{q-m}{2}}<\nabla z,\nabla v>\leq \frac{%
\varepsilon }{N}\left\vert b\right\vert
^{2(q-m+1)}v^{2s}z^{q+2-m}+C_{\varepsilon }\frac{\left\vert \nabla
z\right\vert ^{2}}{z},
\end{equation*}
\begin{eqnarray*}
(\Delta v)^{2} &=&\left(\frac{m-2}{2}\frac{<\nabla z,\nabla v>}{z}+(b-1)(m-1)%
\frac{z}{v}+\left\vert b\right\vert ^{q-m}bv^{s}z^{\frac{q+2-m}{2}}\right)^{2} \\
&\geq &(b-1)^{2}(m-1)^{2}\frac{z^{2}}{v^{2}}+\left\vert b\right\vert
^{2(q-m+1)}v^{2s}z^{q+2-m}+2(b-1)(m-1)\left\vert b\right\vert
^{q-m}bv^{s-1}z^{\frac{q+4-m}{2}} \\
&&-(m-2)\frac{\left\vert \nabla z\right\vert }{\sqrt{z}}(\left\vert
b-1\right\vert (m-1)\frac{z}{v}+\left\vert b\right\vert ^{q-m+1}v^{s}z^{%
\frac{q+2-m}{2}}),
\end{eqnarray*}%
and for any $\varepsilon >0,$%
\begin{equation*}
(m-2)\frac{\left\vert \nabla z\right\vert }{\sqrt{z}}(\left\vert
b-1\right\vert (m-1)\frac{z}{v}\leq \frac{\varepsilon }{N}(b-1)^{2}(m-1)^{2}%
\frac{z^{2}}{v^{2}}+C_{\varepsilon }\frac{\left\vert \nabla z\right\vert ^{2},%
}{z}
\end{equation*}%
\begin{equation*}
(m-2)\frac{\left\vert \nabla z\right\vert }{\sqrt{z}}\left\vert b\right\vert
^{q-m+1}v^{s}z^{\frac{q+2-m}{2}}\leq \frac{\varepsilon }{N}\left\vert
b\right\vert ^{2(q-m+1)}v^{2s}z^{q+2-m}+C_{\varepsilon }\frac{\left\vert
\nabla z\right\vert ^{2}}{z},
\end{equation*}%
thus (\ref{oko}) follows.\medskip
\end{proof}

\noindent\begin{proof}[Proof of Theorem \protect\ref{main}]
We use Lemma \ref{Bern} with $b\in (0,1),$ combined with the estimate 
\begin{equation*}
\left(\frac{1}{N}2(b-1)(m-1)-s\right)\left\vert b\right\vert ^{q-m}bv^{s-1}z^{\frac{%
q+4-m}{2}}\leq \frac{\varepsilon }{N}\left\vert b\right\vert
^{2(q-m+1)}v^{2s}z^{q+2-m}+C_{\varepsilon }\frac{z^{2}}{v^{2}}.
\end{equation*}%
Then there exist constants $C_{i}>0$ depending only on $m,b,N,p,q,$ such that
\begin{equation*}
\frac{1}{2}\mathcal{A}(z)+C_{1}v^{2s}z^{q+2-m}\leq C_{2}\frac{z^{2}}{v^{2}}%
+C_{3}\frac{\left\vert \nabla z\right\vert ^{2}}{z}.
\end{equation*}%
Next we choose $s=-1$ in (\ref{cos}), thus 
\begin{equation*}
b(p+q-m+1)=q-m,
\end{equation*}%
which is positive because $q>m.$ We deduce using H\"{o}lder inequality, 
\begin{equation*}
\mathcal{A}(z)+\frac{C_{4}z^{q+2-m}-C_{5}}{v^{2}}\leq \mathcal{A}(z)+\frac{%
C_{1}z^{q+2-m}-C_{2}z^{2}}{v^{2}}\leq C_{3}\frac{\left\vert \nabla
z\right\vert ^{2}}{z}.
\end{equation*}%
If we apply Lemma \ref{oss} with 
\begin{equation*}
\alpha (x)=\frac{C_{4}}{v^{2}(x)},\qquad \beta (x)=\frac{C_{5}}{v^{2}(x)}%
,\qquad k=q+2-m,
\end{equation*}%
we deduce that any solution in $\overline{B}_\rho(x_{0})$ $\rho >0,$
satisfies 
\begin{equation*}
z(x_{0})\leq C_{6}\left(\frac{1}{\alpha \rho ^{2}}\right)^{\frac{1}{k-1}}+\left(\frac{C_{5}}{%
C_{4}}\right)^{\frac{1}{k}}\leq C_{7}\left(\frac{\max_{B_\rho (x_{0})}v}{\rho }\right)^{\frac{%
2}{q+1-m}}+1),
\end{equation*}%
which yields
\begin{equation}
\left\vert \nabla v(x_{0})\right\vert \leq C_{8}\left(\left(\frac{\max_{\overline{B}_\rho(x_{0})}v}{\rho }\right)^{\frac{1}{q+1-m}}+1\right),  \label{maxa}
\end{equation}%
where we observe that $\frac{1}{q+1-m}<1,$ since $q>m.$ Let $\varepsilon \in
\left( 0,\frac{1}{2}\right] .$ As a consequence, for any solution in $B_{2R}$%
, (or even $\overline{B}_{\frac{3R}{2}})$ considering any $x_{0}\in 
\overline{B}_{R}$ and taking $\rho =R\varepsilon ,$ we get 
\begin{equation}
\max_{\overline{B}_{R}}\left\vert \nabla v\right\vert \leq c\left(\left(\frac{\max_{%
\bar{B}_{R(1+\varepsilon )}}v}{\varepsilon R}\right)^{\frac{1}{q+1-m}}\!\!+1\right)\leq
c\varepsilon ^{-\frac{1}{q+1-m}}\left(\left(\frac{\max_{\bar{B}_{R(1+\varepsilon )}}v}{%
R}\right)^{\frac{1}{q+1-m}}\!\!+1\right),  \label{maxo}
\end{equation}%
\begin{equation*}
\max_{\bar{B}_{R(1+\varepsilon )}}v\leq v(0)+R(1+\varepsilon )\max_{\bar{B}%
_{R(1+\varepsilon )}}\left\vert \nabla v\right\vert,
\end{equation*}%
\begin{equation*}
\frac{\max_{\bar{B}_{R(1+\varepsilon )}}v}{R}\leq \frac{1+v(0)}{R}%
+(1+\varepsilon )\max_{\bar{B}_{R(1+\varepsilon )}}\left\vert \nabla
v\right\vert \leq c_{0}\left(\frac{1}{R}+\max_{\bar{B}_{R(1+\varepsilon
)}}\left\vert \nabla v\right\vert \right),
\end{equation*}%
where $c_{0}=2+v(0)$ depends on $v(0).$ If $R\geq 1,$ 
\begin{equation*}
\left(\frac{\max_{\bar{B}_{R(1+\varepsilon )}}v}{R}\right)^{\frac{1}{q+1-m}}+1\leq
c_{0}^{\frac{1}{q+1-m}}\left(1+\max_{\bar{B}_{R(1+\varepsilon )}}\left\vert
\nabla v\right\vert )^{\frac{1}{q+1-m}}\right)+1\leq c_{1}\left(1+\max_{\bar{B}%
_{R(1+\varepsilon )}}\left\vert \nabla v\right\vert \right)^{\frac{1}{q+1-m}}.
\end{equation*}%
Then from (\ref{maxo}), 
\begin{equation*}
y(R):=1+\max_{\overline{B}_{R}}\left\vert \nabla v\right\vert \leq
1+c_{2}\varepsilon ^{-\frac{1}{q+1-m}}(1+\max_{\bar{B}_{R(1+\varepsilon
)}}\left\vert \nabla v\right\vert )^{\frac{1}{q+1-m}}\leq c_{3}\varepsilon
^{-\frac{1}{q+1-m}}(1+\max_{\bar{B}_{R(1+\varepsilon )}}\left\vert \nabla
v\right\vert )^{\frac{1}{q+1-m}}.
\end{equation*}%
Using the definition of $y$, this is 
\begin{equation*}
y(R)\leq c\varepsilon ^{-\frac{1}{q+1-m}}\left(y((1+\varepsilon
)R)\right)^{\frac{1}{q+1-m}},
\end{equation*}%
where $c$ depends on $v(0).$ Therefore $y(R)$ is bounded as a consequence of Lemma \ref{boot}.
Thus $\left\vert \nabla v\right\vert $ is bounded and using the definition of $v$ with the value of $b$,  
\begin{equation*}
\left\vert \nabla v\right\vert ^{q-m}=u^{p+1}\left\vert \nabla u\right\vert
^{q-m}\in L^{\infty }(\mathbb{R}^{N}).
\end{equation*}%
Next we consider any $l\geq 0$ such that $u-l>0.$ The function $u_{l}=u-l$
satisfies 
\begin{equation*}
0\leq -\Delta _{m}u_{l}\leq C_{\infty }\frac{\left\vert \nabla u\right\vert
^{m}}{u}\leq C_{\infty }\frac{\left\vert \nabla u_{l}\right\vert ^{m}}{u_{l}},
\end{equation*}%
with $C_{\infty }=\left\Vert u^{p+1}\left\vert \nabla u\right\vert
^{q-m}\right\Vert _{L^{\infty }(\mathbb{R}^{N})}$.  Then the function  $w_{l}=u_{l}^{\sigma }$ with $\sigma >1$ to be specified below, satisfies 
\begin{eqnarray*}
-\Delta _{m}w_{l} &=&\sigma ^{m-1}u_{l}{}^{(\sigma -1)(m-1)}(-\Delta
_{m}u_{l}+(\sigma -1)(m-1)\frac{\left\vert \nabla u_{l}\right\vert ^{m}}{%
u_{l}}) \\
&\leq &\sigma ^{m-1}((\sigma -1)(m-1)-C_{\infty })u_{l}^{\sigma
(m-1)-m}\left\vert \nabla u_{l}\right\vert ^{m}.
\end{eqnarray*}%
Therefore $w_{l}$ is $m$-subharmonic for $\sigma $ large enough.

We first take $l=0,$ so $w=u^{\sigma }.$ By Lemma \ref{subha}, for any $\tau
>0,$ there exists a constant $C_{\tau }=C_{\tau }(N,m,\tau )$ such that 
\begin{equation}\label{lau1}
\sup_{B_{R}}w\leq C_{\tau }\left(\frac{1}{|B_{2R}|}\int_{B_{2R}}w^{\tau }\right)^{\frac{%
1}{\tau }}=C_{\tau }\left(\frac{1}{|B_{2R}|}\int_{B_{2R}}u^{\tau \sigma }\right)^{\frac{%
1}{\tau }},
\end{equation}%
and since $u$ is $m$-superharmonic, there holds for any $\theta \in (0,\frac{N(m-1)}{N-m%
}),$ \cite{Tru}
\begin{equation}\label{lau2}
\inf_{B_{R}}u\geq c_{\theta }\left(\frac{1}{|B_{2R}|}\int_{B_{2R}}u^{\theta }\right)^{%
\frac{1}{\theta }}.
\end{equation}%
Taking $\tau =\frac{\theta }{\sigma },$ we deduce 
\begin{equation}
\sup_{B_{R}}u=(\sup_{B_{R}}w)^{\frac{1}{\sigma }}\leq C_{\tau }^{\frac{1}{%
\sigma }}\left(\frac{1}{|B_{2R}|}\int_{B_{2R}}u^{\tau \sigma }\right)^{\frac{1}{s\sigma 
}}\leq \frac{C_{\tau }^{\frac{1}{\sigma }}}{c_{\theta }}\inf_{B_{R}}u.
\label{des}
\end{equation}%
This means that $u$, and also $%
w$,  satisfies the Harnack inequality in $\mathbb{R}^{N}$: 
\begin{equation*}
\sup_{B_{R}}w\leq \frac{C_{\tau }}{c_{\theta }^{\sigma }}\inf_{B_{R}}w.
\end{equation*}%
But $r\mapsto \mu (r)=\inf_{\left\vert x\right\vert =r}u=\inf_{B_{r}}u$ from
the maximum principle, is nonincreasing, so it has a limit $L\geq 0$ as $%
r\rightarrow \infty $. This implies that $u$ is bounded and $l=\inf_{%
\mathbb{R}^{N}}u\geq 0$. If we replace $u$ by $u_{l}$ and $w$ by $w_{l}$, 
then  (\ref{lau1}) holds with $w$ and $u$ replaced respectively by $w_l$ and $u_l$ since $w_{l}$ is m-subharmonic, but 
also  (\ref{lau2}) holds with $u$ replaced by $u_l$ since $u_l$ is m-superharmonic. Thus
\begin{equation*}
\sup_{B_{R}}w_{l}\leq C(\inf_{B_{R}}u_{l})^{\sigma }.
\end{equation*}%
Therefore $\sup_{B_{R}}w_{l}$ tends to $0$ as $R\rightarrow \infty .$ Then $%
w_{l}\equiv 0,$ thus $u\equiv l.$
\end{proof}

\subsection{Asymptotic behaviour near $\infty $}

In this section we consider the behaviour of solutions defined in an exterior domain.\medskip

\noindent\begin{proof}[Proof of Theorem \protect\ref{exte}]
Consider a nonnegative solution $u=v^b$ ($0<b<1$) of (\ref{eqm}) in $\mathbb{R}^{N}\backslash B_{r_{0}}$. From (\ref{maxa}%
) the function $v$ satisfies  in $\overline{B}_\rho (x_{0})$ ($\rho >0$),
\begin{equation}
\left\vert \nabla v(x_{0})\right\vert \leq C\left(\left(\frac{\max_{\overline{B}_\rho(x_{0})}v}{\rho }\right)^{\frac{1}{q+1-m}}+1\right),  \label{app}
\end{equation}%
with $C=C(N,p,q,m).$ Here we denote by $c_{i}$ some positive constants depending on $%
r_{0},N,p,q,m.$ Let $R>4r_{0}$ and $0<\varepsilon \leq \frac{1}{4}.$ Applying
 (\ref{app}) with $\rho =\varepsilon R,$ we get
\begin{equation*}
\left\vert \nabla v(x_{0})\right\vert\leq c_{1}\left(\left(\frac{\max_{\overline{B}_{\varepsilon R}(x_{0})}v}{\varepsilon R}\right)^{\frac{1}{q+1-m}}+1\right)\leq
c_{1}\varepsilon ^{-\frac{1}{q+1-m}}\left(\left(\frac{\max_{\overline{B}_{\varepsilon  R}%
(x_{0})}v}{R}\right)^{\frac{1}{q+1-m}}+1\right),
\end{equation*}%
then%
\begin{equation*}
\max_{\left\vert x\right\vert =R}\left\vert \nabla v\right\vert \leq c_{2}\left(\left(
\frac{\max_{R(1-\frac{\varepsilon }{2})\leq \left\vert x\right\vert \leq R(1+%
\frac{\varepsilon }{2})}v}{\varepsilon R}\right)^{\frac{1}{q+1-m}}+1\right)\leq c_{3}\left(\left(
\frac{\max_{\frac{R}{1+\varepsilon }\leq \left\vert x\right\vert \leq
R(1+\varepsilon )}v}{\varepsilon R}\right)^{\frac{1}{q+1-m}}+1\right),
\end{equation*}
\begin{equation*}
\max_{\frac{R}{2}\leq \left\vert x\right\vert \leq 2R}\left\vert \nabla
v\right\vert \leq c_{4}\left(\left(\frac{\max_{\frac{R}{2(1+\varepsilon )}\leq
\left\vert x\right\vert \leq 2R(1+\varepsilon )}v}{\varepsilon R}\right)^{\frac{1}{%
q+1-m}}+1\right),
\end{equation*}
and finally, 
\begin{equation*}
1+\max_{\frac{R}{2}\leq \left\vert x\right\vert \leq 2R}\left\vert \nabla
v\right\vert \leq c_{5}\varepsilon ^{-\frac{1}{q+1-m}}\left(\left(\frac{\max_{\frac{R}{%
2(1+\varepsilon )}\leq \left\vert x\right\vert \leq 2R(1+\varepsilon )}v}{R}%
\right)^{\frac{1}{q+1-m}}+1\right).
\end{equation*}%
From Lemma \ref{mude}-(i), $\mu (r)=\inf_{\left\vert
x\right\vert=r }u=(\inf_{\left\vert x\right\vert =r}v)^{b}$ is bounded : let 
$M=\max_{r\geq r_{0}}\mu (r).$ Note that $M$ depends on $u$. Now for any $x$
such that $\left\vert x\right\vert =\rho ,$ there exists at least one point $%
x_{\rho }$ where $v(x_{\rho })=\inf_{\left\vert x\right\vert =\rho }v$ . We
can join any point $x\in S_{\rho }$ to $x_{\rho }$ by a connected chain of balls of radius $\varepsilon\rho$ with at points $x_{i}\in S_{\rho }$  and this chain can be constructed so that it has at most $\frac{\pi}{\varepsilon}$ elements.
Considering one ball containing $x$ and joining it to a ball containing $%
x_{\rho },$ we get that 
\begin{equation*}
v(x)\leq v(x_{\rho })+C_{N}\varepsilon ^{-1}\rho \max_{\frac{\rho }{
1+\varepsilon }\leq \left\vert x\right\vert \leq \rho (1+\varepsilon
)}\left\vert \nabla v\right\vert \leq M^{\frac{1}{b}}+C_{N}\varepsilon
^{-1}\rho \max_{\frac{\rho }{
1+\varepsilon }\leq \left\vert x\right\vert \leq \rho (1+\varepsilon
)}\left\vert \nabla v\right\vert.
\end{equation*}%
Then 
\begin{equation*}\begin{array}{lll}\displaystyle
\max_{\frac{R}{2(1+\varepsilon )}\leq \left\vert x\right\vert \leq
2R(1+\varepsilon )}v\leq c_{M}^{1}\left(1+\varepsilon ^{-1}R\max_{\frac{R}{%
2(1+3\varepsilon )}\leq \left\vert x\right\vert \leq 2R(1+3\varepsilon
)}\left\vert \nabla v\right\vert \right)
\\[4mm]\phantom{\displaystyle\max_{\frac{R}{2(1+\varepsilon )}\leq \left\vert x\right\vert \leq
2R(1+\varepsilon )}v}
\displaystyle\leq c_{M}^{2}\varepsilon ^{-1}R\left(1+\max_{%
\frac{R}{2(1+3\varepsilon )}\leq \left\vert x\right\vert \leq
2R(1+3\varepsilon )}\left\vert \nabla v\right\vert \right),
\end{array}\end{equation*}
and
\begin{equation*}
\frac{1}{\varepsilon R}\max_{\frac{R}{2(1+\varepsilon )}\leq \left\vert x\right\vert \leq
2R(1+\varepsilon )}v\leq c_{M}^{3}\varepsilon
^{-2}\left(1+\max_{\frac{R}{2(1+3\varepsilon )}\leq \left\vert x\right\vert
\leq 2R(1+3\varepsilon )}\left\vert \nabla v\right\vert \right).
\end{equation*}
Using estimate (\ref{maxo}) we obtain
\begin{equation}\label{lau3}
1+\max_{\frac{R}{2}\leq \left\vert x\right\vert \leq 2R}\left\vert \nabla
v\right\vert \leq c_{M}^{4}\varepsilon ^{-\frac{2}{q+1-m}}\left(1+\max_{\frac{R%
}{2(1+3\varepsilon )}\leq \left\vert x\right\vert \leq 2R(1+3\varepsilon
)}\left\vert \nabla v\right\vert \right)^{\frac{1}{q+1-m}}.
\end{equation}%
Let $\{\varepsilon_n\}_{n\geq 1}$ be a positive decreasing sequence such that $P_n:=\prod_{j=1}^n(1+\varepsilon_j)\to 2$ and 
$\Theta_n:=\prod_{j=1}^n\varepsilon_{j+1}^{d^j}\to\Theta>0$ when $n\to\infty$. It is easy to find such sequences such that $\varepsilon_j\sim 2^{-j}$.  
For $\frac R2\leq a<2R\leq b$ we set 
$$y(a,b)=1+\max_{a\leq \left\vert x\right\vert \leq b}\left\vert \nabla v\right\vert\quad.
$$
Then (\ref{lau3}) with $(a,b)=(\frac{R}{2}, 2R)$ and $\varepsilon_1=3\varepsilon$ asserts that
$$y(\tfrac{R}{2}, 2R)\leq c_5\varepsilon_1^{-h}\left(y(\tfrac{R}{2(1+\varepsilon_1)}, 2R(1+\varepsilon_1))\right)^d\quad\text{with }\;h=\frac{2}{q+1-m}\,\text{ and }\;d=\frac{1}{q+1-m}\in (0,1).
$$
Applying (\ref{lau3}) with $(a,b)=(\frac{R}{2P_n}, 2RP_n)$ we obtain
$$y(\tfrac{R}{2P_n}, 2RP_n)\leq c_5\varepsilon_{n+1}^{-h}\left(y(\tfrac{R}{2P_{n+1}}, 2RP_{n+1})\right)^d.
$$
By induction, we deduce
\begin{equation}\label{lau4}
y(\tfrac{R}{2}, 2R)\leq c_5^{1+d+d^2+...+d^n}\Theta_n^{-h}\left(y(\tfrac{R}{2P_{n+1}}, 2RP_{n+1})\right)^{d^{n+1}}.
\end{equation}
Since $y(\tfrac{R}{2P_{n+1}}, 2RP_{n+1})\to y(\tfrac{R}{4}, 4R)$, we obtain that 
\begin{equation}\label{lau5}
1+\max_{\frac R2\leq \left\vert x\right\vert \leq 2R}\left\vert \nabla v\right\vert \leq c_5^{\frac{d}{1-d}}\Theta^{-h}:=C(M,N,p,q,m).
\end{equation}

Then we conclude again that $\left\vert \nabla v\right\vert $ is bounded for 
$\left\vert x\right\vert \geq 4r_{0},$ then in $\mathbb{R}^{N}\backslash
B_{r_{0}}$ since we have  assumed that $u\in C^{1}\mathbb{R}^{N}\backslash
B_{r_{0}}.$ We consider again the function $w=u^{\sigma },$ for $\sigma $
depending of $r_{0},$ large enough so that $(\sigma -1)(m-1)\geq \left\Vert
u^{p+1}\left\vert \nabla u\right\vert ^{q-m}\right\Vert _{L^{\infty }(%
\mathbb{R}^{N}\backslash B_{r_{0}})}.$ As in the proof of Theorem %
\ref{main} we conclude that $w$ is $m$-subharmonic in $\mathbb{R}%
^{N}\backslash \bar{B}_{r_{0}}$. Hence $u$ satisfies the Harnack inequality, using 
the estimate (\ref{des}). Therefore, for any $R>2r_{0}$, 
\begin{equation*}
\sup_{\frac{R}{2}\leq \left\vert x\right\vert \leq 3\frac{R}{2}}u\leq C\inf_{%
\frac{R}{2}\leq \left\vert x\right\vert \leq 3\frac{R}{2}}u.
\end{equation*}%
Since $u$ is is $m$-superharmonic, it follows by the strong maximum principle, that it cannot have any local
minimum in $\mathbb{R}^{N}\backslash \bar{B}_{r_{0}}$. Since  $u^{\sigma }$ is m-subharmonic it cannot 
have any local maximum too, and $u$ shares this property. As a consequence $%
\left\vert \nabla u\right\vert $\ does not vanish in $\mathbb{R}%
^{N}\backslash \bar{B}_{r_{0}}$. The function $r\mapsto \mu $ is bounded by Lemma \ref%
{mude}, hence $u$ is also bounded by the above Harnack inequality. Finally $\mu (r)$ is
monotone for large $r,$ so it admits a limit $l\geq 0$ when $r\to\infty$.

If $\mu (r)$ is nonincreasing for $r\geq r_{1}>r_{0}$, then $u-l\geq 0,$ so
we can consider the function $w_{l}$ instead of $w.$ Then 
\begin{equation*}
\max_{R\leq \left\vert x\right\vert \leq 2R}w_{l}\leq C(\inf_{R\leq
\left\vert x\right\vert \leq 2R}(u-l))^{\sigma },
\end{equation*}%
Then $w_{l}$ tends to $0,$ thus $u$ tends to $l$ as $\left\vert x\right\vert
\rightarrow \infty $. Since $u-l$ is $m$-superharmonic in $\mathbb{R}%
^{N}\backslash B_{r_{0}},$ then there holds 
\begin{equation*}
u(x)-l\geq C\left\vert x\right\vert ^{\frac{m-N}{m-1}},
\end{equation*}%
with $C=C(r_{0},N,m,u),$ see for example \cite[Proposition 2.6]{BiPo}, \cite[
Lemma 2.3]{SeZo}. It is the case in particular when $u$\ is a solution in%
\textbf{\ }$\mathbb{R}^{N}\backslash \left\{ 0\right\}$. Note
that the radial solutions such that $\mu $ is nonincreasing are
precisely defined in $(0,\infty )$.

Now, it follows from the upper estimate of $y(R),$ that the function $u$ satisfies
\begin{equation*}
-\Delta _{m}u=u^{p}\left\vert \nabla u\right\vert ^{q}\leq C\frac{\left\vert
\nabla u\right\vert ^{m}}{u},
\end{equation*}
 in $\mathbb{R}^{N}\backslash B_{r_{0}}$. Next suppose that $l>0.$ Then 
\begin{equation*}
-\Delta _{m}u\leq C^{\prime }\left\vert \nabla u\right\vert ^{m}.
\end{equation*}%
The function $U$ (still used in case $q=m),$ defined by 
\begin{equation*}
U=(m-1)(e^{\frac{u-l}{m-1}}-1),
\end{equation*}%
satisfies $-\Delta _{m}U\leq 0$ and $U$ tend to $0$ at $\infty$. Then there
exists $R_{\varepsilon }>0$ such that $U(x)\leq \varepsilon $ for $%
\left\vert x\right\vert \geq R_{\varepsilon }.$ For $R>R_{\varepsilon },$ the function 
$x\mapsto\omega(x) :=\varepsilon
+(\sup_{\left\vert z\right\vert =r_{0}}U(z))(\frac{\left\vert x\right\vert }{%
r_{0}})^{\frac{m-N}{m-1}}$ is a $m$-harmonic in $B_{R}\backslash
B_{r_{0}} $, hence it is larger than  $U$. Letting $%
\varepsilon \rightarrow 0$ we get $U\leq C\left\vert x\right\vert ^{\frac{m-N%
}{m-1}}$ near $\infty ;$ and $U$ has the same behaviour as $u-l$ $,$ so we
deduce the estimate from above,
\begin{equation*}
u(x)-l\leq C\left\vert x\right\vert ^{\frac{m-N}{m-1}}.
\end{equation*}%
Then we get the estimate (\ref{enca}).
\end{proof}

\begin{remark}
(i) In case $u$ is defined in $\mathbb{R}^{N}\backslash \left\{ 0\right\} $
and $l=0,$ we obtain the estimates%
\begin{equation*}
C_{1}\left\vert x\right\vert ^{\frac{m-N}{m-1}}\leq u(x)\leq C_{2}\left\vert
x\right\vert ^{\frac{1}{\sigma }\frac{m-N}{m-1}}.
\end{equation*}%
It would be interesting to improve the estimate from above.\smallskip

\noindent (ii) If $u$ is defined in $\mathbb{R}^{N}\backslash B_{r_{0}}$ and if $\mu $
is nonincreasing, we have proved that $u$ has a limit $l\geq 0$ as $%
\left\vert x\right\vert \rightarrow \infty .$ If $\mu $ is nondecreasing, we
only obtain that $\mu (r)=\inf_{\left\vert x\right\vert =r}u(x)$ has a limit $l,$
and $\sup_{\left\vert x\right\vert =r}u(x)$ has a limit $\lambda \geq l.$
Indeed the function $w$ is $m$-subharmonic positive and bounded, so the
function $r\mapsto \sup_{\left\vert x\right\vert =r}w=(\sup_{\left\vert
x\right\vert =r}u)^{\sigma }$ is also monotone for large $r$ and has a limit 
$\lambda ^{\sigma }.$ We have $w=u^{\sigma }\leq \lambda ^{\sigma },$ so $%
\sup_{\left\vert x\right\vert =r}u$ is also nondecreasing. But we cannot
prove that $\lambda =l.$
\end{remark}

\subsection{Behaviour near an isolated singularity\label{isol}}

In this section we study the behaviour of solutions with an isolated singularity at the
origin.\medskip

\noindent {\bf Proof of Theorem \protect\ref{ball}}
Let $u$ be a nonnegative solution $u$ of (\ref{eqm}) in $B_{r_{0}}\backslash \left\{
0\right\} .$ We apply directly the Bernstein method to $u:$ we obtain by
Lemma \ref{Bern} with $b=1,$ and then $s=p.$ Setting $\xi =\left\vert \nabla
u\right\vert ^{2},$ we get 
\begin{equation*}
\frac{1}{2}\mathcal{A}(\xi )+C_{1}u^{2p}z^{q+2-m}\leq C_{2}\frac{\xi ^{2}}{%
u^{2}}+C_{3}\frac{\left\vert \nabla \xi \right\vert ^{2}}{\xi }.
\end{equation*}%
By the strong maximum principle, there exists a constant $a_{0}>0$
depending on $r_{0}$ and $N,p,q,$ such that $u\geq a_{0}$ in $%
B_{\frac {r_0}2}\backslash \left\{ 0\right\}.$ Therefore, there holds%
\begin{equation*}
\frac{1}{2}\mathcal{A}(\xi )+C_{1}^{2p}a_{0}^{2p}z^{q+2-m}\leq C_{2}\frac{%
\xi ^{2}}{a_{0}^{2}}+C_{3}\frac{\left\vert \nabla \xi \right\vert ^{2}}{\xi },
\end{equation*}%
 in $B_{\frac{r_{0}}{2}}\backslash \left\{ 0\right\}$. Then from Lemma \ref{oss}, we deduce the inequality
\begin{equation*}
z(x_{0})\leq c\left(\left(\frac{1}{a_{0}^{2p}\rho ^{2}}\right)^{\frac{1}{q+1-m}}+\left(\frac{1}{%
a_{0}^{2(p+1)}}\right)^{\frac{1}{q+2-m}}\right)\leq c_{0}^{2}\left(\frac{1}{\rho ^{\frac{2}{%
q+1-m}}}+1\right),
\end{equation*}%
for any ball $\overline{B}_\rho(x_{0})
\subset B_{\frac{r_{0}}{2}}\backslash \left\{ 0\right\}$, with $
c=c(N,p,q,m)$ and $c_{0}^{2}=c\left(a_{0}^{-\frac{p}{q+1-m}}+a_{0}^{-\frac{p+1}{q+2-m}}\right).$
Hence for any $x\in \overline{B}_{R}\backslash \left\{ 0\right\} ,$ with $%
R\leq \min (1,\frac{r_{0}}{8})$, 
\begin{equation*}
\left\vert \nabla u(x)\right\vert \leq \frac{2c_{0}}{\left\vert x\right\vert
^{\frac{1}{q+1-m}}}\leq \frac{2c_{0}}{\left\vert x\right\vert ^{\frac{1}{%
q+1-m}}}.
\end{equation*}%
As a consequence, considering any $x_{R}$ such that $x,x^{\prime }\in 
\overline{B}_{\frac{R}{2}}\backslash \left\{ 0\right\} $, there holds 
\begin{equation*}
\left\vert u(x^{\prime })-u(x)\right\vert \leq 2c_{0}R^{\frac{q-m}{q+1-m}}.
\end{equation*}%
Since $q>m,$ $u$ is bounded near $0.$ Then, with constants $C>0$ depending on $%
a_{0},$ 
\begin{equation*}
-\Delta _{m}u=f\leq C\left\vert \nabla u\right\vert ^{q}\leq C\left\vert
x\right\vert ^{-\frac{q}{q+1-m}}.
\end{equation*}%
Then $f\in L_{loc}^{\frac{N}{m}+\varepsilon }(B_{\frac{R}{4}}),$ since $N-%
\frac{N}{m}\frac{q}{q+1-m}=\frac{N(m-1)(q-m)}{m(q+1-m)}>0.$ Thus from \cite%
{Se} $u$ can be extended as a continuous function, solution of the equation
in the sense of distributions. Then we deduce that for any $x\in \overline{B}%
_{\frac{R}{2}}\backslash \left\{ 0\right\}$, 
\begin{equation*}
\left\vert u(0)-u(x)\right\vert \leq c_{0}\left\vert x\right\vert ^{\frac{q-m%
}{q+1-m}}.
\end{equation*}%
Moreover, replacing $r_{0}$ by $\rho >0$ small enough such that $u(x)\geq 
\frac{u(0)}{2}$ in \ $B_{\rho },$ then $a_{0}\geq \frac{u(0)}{2},$ hence $%
c_{0}\leq C(N,p,q,m,u(0)),$ and for $\left\vert x\right\vert \leq \min (1,%
\frac{\rho }{8}),$ we infer
\begin{equation*}
\left\vert u(0)-u(x)\right\vert \leq C\left\vert x\right\vert ^{\frac{q-m}{%
q+1-m}}.
\end{equation*}

Next  assume that $u$ is defined in $\mathbb{R}^{N}\backslash \left\{
0\right\} $ and is not constant. Then $u$ is bounded, since it is bounded
near $0$ and $\infty .$ Then $r\mapsto \mu (r)=\inf_{\left\vert x\right\vert
=r}u$ is nonincreasing, thus $\mu (r)\leq u(0)$: indeed $\forall
\varepsilon >0,$ we have $\mu (\left\vert x\right\vert )\leq u(x)\leq
u(0)+\varepsilon $ for any $\left\vert x\right\vert \leq r_{\varepsilon },$
then from the monotone decreasingness, $\mu (r)\leq u(0)+\varepsilon $ for any $r>0$%
. From Theorem \ref{exte} $\lim_{\left\vert x\right\vert \rightarrow
\infty }u=l=\lim_{r\rightarrow \infty }\mu .$ and then necessarily $l\leq
u(0).$ Suppose that there exists $x\neq 0$ such that $u(x)>u(0);$ then $u$
has a maximum in $\mathbb{R}^{N}\backslash \left\{ 0\right\} ,$ but $%
\left\vert \nabla u\right\vert $ cannot vanish in $\mathbb{R}^{N}\backslash
\left\{ 0\right\} $ by Theorem \ref{exte}, so we get a contradiction.

\section{Radial case\label{radi}}

If $u$ is a positive radial solution  of (\ref{eqm}%
), and if we denote for simplicity $u(r)=u(x)$ with $r=|x|$, then $u$ satisfies the following o.d.e. 
\begin{equation}
(\left\vert u^{\prime }\right\vert ^{m-2}u^{\prime })^{\prime }+\frac{N-1}{r}%
\left\vert u^{\prime }\right\vert ^{m-2}u^{\prime }+u^{p}\left\vert
u^{\prime }\right\vert ^{q}=r^{1-N}(r^{N-1}\left\vert u^{\prime }\right\vert
^{m-2}u^{\prime })^{\prime }+u^{p}\left\vert u^{\prime }\right\vert ^{q}=0.  \label{eqr}
\end{equation}
We begin with a simple observation about the set of zeros of $u^{\prime }.$ We have
shown above that any solution of the exterior problem is either constant, or
its gradient does not vanish. In the radial case, the proof is elementary:

\begin{proposition}
\label{van} Assume $q>m-1,$ $p\geq 0$. Then any nonnegative radial solution of (\ref{eqr}) on a segment 
$\left[ r_{1},r_{2}\right] \subset (0,\infty)$ is constant, or strictly monotone. 
\end{proposition}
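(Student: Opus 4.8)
The plan is to exploit the first-integral structure of the radial equation (\ref{eqr}). Setting $P(r):=r^{N-1}|u'(r)|^{m-2}u'(r)$, equation (\ref{eqr}) becomes $P'(r)=-r^{N-1}u(r)^{p}|u'(r)|^{q}\le 0$, so $P$ is nonincreasing on $[r_1,r_2]$, and $P(r)$ has the same sign as $u'(r)$, vanishing exactly where $u'$ does. A continuous nonincreasing function cannot change sign without vanishing, so if $P$ never vanishes then $u'$ has a strict constant sign and $u$ is strictly monotone; that is the easy case. Otherwise I would set $J:=\{r\in[r_1,r_2]:P(r)=0\}$, a nonempty closed subinterval $[\alpha,\beta]$ (possibly a single point): on $J$ one has $u'\equiv0$, hence $u\equiv c$ for some $c\ge0$, while $P>0$ on $[r_1,\alpha)$ and $P<0$ on $(\beta,r_2]$, so $u$ is strictly increasing on $[r_1,\alpha]$ and strictly decreasing on $[\beta,r_2]$. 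The problem then reduces to showing $\alpha=r_1$ and $\beta=r_2$, for then $P\equiv0$ and $u\equiv c$.

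To that end I would argue by contradiction, assuming $\beta<r_2$ (the case $\alpha>r_1$ being entirely symmetric). Since $u\ge0$ is strictly decreasing on $(\beta,r_2]$ starting from $u(\beta)=c$, one must have $c>0$, so $u\ge c/2$ on some $[\beta,\beta+\delta]\subset[\beta,r_2]$. On $(\beta,\beta+\delta]$ set $v:=-u'>0$, with $v(\beta)=0$; then $P=-r^{N-1}v^{m-1}$, and integrating $P'=-r^{N-1}u^{p}v^{q}$ from $\beta$ to $r$ gives
\begin{equation*}
r^{N-1}v(r)^{m-1}=\int_{\beta}^{r}s^{N-1}u(s)^{p}v(s)^{q}\,ds\le C\int_{\beta}^{r}v(s)^{q}\,ds=:C\,W(r),
\end{equation*}
with $C=C(N,p,q,\beta,\delta,u)>0$ (obtained by bounding $s^{N-1}u(s)^{p}$ on the compact interval), and $W(\beta)=0$, $W\ge0$, $W\in C^1$, $W'=v^{q}$. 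Hence $v^{m-1}\le C\,W$, so $W'=v^{q}\le C'\,W^{\kappa}$ with $\kappa:=q/(m-1)>1$, the hypothesis $q>m-1$ being used exactly here.

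The key step, which I expect to be the main obstacle, is then a ``detachment'' lemma: a nonnegative $W\in C^1$ with $W(\beta)=0$ and $W'\le C'W^{\kappa}$, $\kappa>1$, vanishes identically near $\beta$. I would prove it by contradiction: if $W(r_*)>0$ for some $r_*\in(\beta,\beta+\delta]$, let $\rho$ be the largest zero of $W$ in $[\beta,r_*)$; on $(\rho,r_*]$ one has $(W^{1-\kappa})'=(1-\kappa)W^{-\kappa}W'\ge(1-\kappa)C'$, and integrating from $\rho+\varepsilon$ to $r_*$ and letting $\varepsilon\to0^{+}$ forces the left-hand side to $-\infty$ (since $W(\rho+\varepsilon)^{1-\kappa}\to+\infty$ and $1-\kappa<0$) while the right-hand side stays bounded --- a contradiction. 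Thus $W\equiv0$ on $[\beta,\beta+\delta]$, hence $v\equiv0$ there, contradicting $v>0$ on $(\beta,\beta+\delta]$; so $\beta=r_2$. The mirror argument on the left (with $v=u'>0$, $v(\alpha)=0$, integrating $P'=-r^{N-1}u^{p}v^{q}$ from $r$ to $\alpha$, using $u(\alpha)=c>0$ again) gives $\alpha=r_1$. Hence $u\equiv c$, and together with the first case this shows $u$ is constant or strictly monotone.

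Beyond this detachment step everything is elementary sign/monotonicity analysis of $P$. It is worth noting that when $m=2$ the detachment is nothing but uniqueness for the locally Lipschitz ODE $u''=-\tfrac{N-1}{r}u'-u^{p}|u'|^{q}$ (the constant $c$ being a solution through $(c,0)$), but the integral formulation above has the advantage of covering all $1<m<N$ uniformly, including the degenerate cases $m\neq2$.
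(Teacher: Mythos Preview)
Your proof is correct and follows essentially the same route as the paper's: both set $P(r)=r^{N-1}|u'|^{m-2}u'$, observe that $P$ is nonincreasing so its zero set is a closed subinterval, and rule out a proper zero set via a blow-up argument hinging on $q/(m-1)>1$. The only cosmetic difference is that the paper differentiates $P^{(m-1-q)/(m-1)}$ directly from (\ref{eqr}) (its derivative is a bounded function of $r$, contradicting $P^{(m-1-q)/(m-1)}\to\infty$ as $P\to 0$ at the endpoint of the zero set), whereas you pass through the auxiliary integral $W=\int v^{q}$ and the inequality $W'\le C'W^{\kappa}$; your detachment lemma is precisely the same blow-up of $W^{1-\kappa}$.
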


\noindent\begin{proof}By the strong maximum principle \cite{Vaz}, we can assume that $u>0$ on $(r_{1},r_{2})$. The function 
$$r\mapsto W(r):=r^{N-1}\left\vert
u^{\prime }(r)\right\vert ^{m-2}u^{\prime }(r)$$
 is nonincreasing. Suppose that $%
u^{\prime }$ has two zeros $\rho _{1}$ and $\rho _{2}$ in $(r_{1},r_{2})$, then by integrating $W'$ and using the equation, we deduce that $u^{\prime }\equiv 0$ on $[\rho_{1},\rho _{2}]$, hence $u$ is constant therein, therefore we can assume that $[\rho_{1},\rho _{2}]$ is the maximal subinterval of $\left[ r_{1},r_{2}\right]$ where $u'$ vanishes.  If $[r_{1},\rho _{1}]\neq \left[ r_{1},r_{2}\right]$, for example 
$r_1<\rho_1$, then $u'>0$ on $(r_1,\rho_1)$ where $u'(r)=(r^{1-N}W(r))^{\frac{1}{m-1}}$. By (\ref{eqr}), 
\begin{equation}
\frac{m-1}{m-1-q}\left(W^{\frac{m-1-q}{m-1}}\right)^{\prime }=\left\vert W\right\vert ^{-\frac{q}{m-1}}W^{\prime }=-r^{N-1-(N-1)\frac{q}{%
m-1}}u^{p},  \label{tric}
\end{equation}
on $(r_1,\rho_1)$ and $\lim_{r\to\rho_1}u^{\prime }r)=0$. Since $m-1-q<$ this implies $\lim_{r\to\rho_1}W^{\frac{m-1-p}{m-1}}r)=\infty$, a contradiction since $u$ is bounded on $[r_1,\rho_1]$. We proceed similarly if $r_1=\rho_1$ but $\rho_2<r_2$ or if $\rho_1=\rho_2$. Hence either $u$ is constant or it is strictly monotone.
\end{proof}\medskip

Next we make a complete description of the radial solutions for $p\geq 0,$ $%
q>m.$

\subsection{The case $p=0$}

This case $p=0$ of the Hamilton-Jacobi equation is well known, since
equation (\ref{tric}) can be directly integrated, so the solutions are
explicit, and are a Ariadne's thread for studying  the case $p>0$. We find different types
of nonconstant solutions according to the sign of $u^{\prime }:$ 
\begin{equation*}
\left\{ 
\begin{array}{c}
u^{\prime }=r^{\frac{1-N}{m-1}}(C_{1}-a_{m,q}^{-1}r^{-a_{m,q})})^{-\frac{1}{%
q-m+1}} \\ 
u^{\prime }=-r^{\frac{1-N}{m-1}}(C_{2}+a_{m,q}^{-1}r^{-a_{m,q})})^{-\frac{1}{%
q-m+1}}%
\end{array}%
\right.
\end{equation*}%
where $a_{m,q}=\frac{(N-1)q-N(m-1)}{m-1}>0$ since $q>\frac{N(m-1)}{N-1}>m-1;$
and the value of $u$ follows by integration, with the requirement that $u>0$. The
solutions such that $C_{1}>0$ satisfy $\lim_{r\rightarrow 0}r^{\frac{1}{q-m+1%
}}u^{\prime }(r)=-a_{m,q}^{\frac{1}{q-m+1}},$ then $\lim_{r\rightarrow
0}u(r)=u_{0}>0$ , since $q>m.$ The conclusions of theorem \ref%
{radial} follow in that case.

\subsection{The case $p>0$}

Equation (\ref{eqr}) can be reduced to an autonomous system, since it is
invariant by the transformation $u\mapsto T_{\lambda }u$ ($\lambda>0$) given by 
\begin{equation}
T_{\lambda }u(x)=\lambda ^{-\frac{q-m}{p+q+1-m}}u(\lambda x).  \label{sca}
\end{equation}%
Here we perform a change of unknown, introduced in \cite{BiGi}, which
consists in a differentiation of the equation, as in the Bernstein technique. We
set 
\begin{equation}
X(t)=-r\frac{u^{\prime }(r)}{u(r)},\qquad Z(t)=-ru^{p}\left\vert u^{\prime
}\right\vert ^{q-m}u^{\prime },\qquad t=\ln r,  \label{xz}
\end{equation}%
and obtain the following quadratic system of Kolmogorov type, valid any point $t$
where $u^{\prime }(t)\neq 0,$ and any reals $m,p,q,$ 
\begin{equation}
\left\{ 
\begin{array}{ccc}
X_{t} & = & X(X-\frac{N-m}{m-1}+\frac{Z}{m-1}) \\ [2mm]
Z_{t} & = & Z(N-\frac{N-1}{m-1}q-pX+\frac{q+1-m}{m-1}Z),%
\end{array}%
\right.  \label{XZ}
\end{equation}%
in the region $Q=\left\{ (X,Z)\in \mathbb{R}^{2}:XZ>0\right\} $. Note that
the trajectories $X=0$ and $Z=0$ are not admissible in our study. Since $%
p+q\neq m-1,$ we can recover $u$ and $u^{\prime }$ by 
\begin{equation}
u=(r^{q-m}\left\vert Z\right\vert \left\vert X\right\vert ^{m-1-q})^{\frac{1%
}{p+q-m+1}},\quad u^{\prime }=-\frac{Xu}{r}=(r^{-(p+1)}\left\vert
Z\right\vert \left\vert X\right\vert ^{p})^{\frac{1}{p+q-m+1}}\text{sign}(-X).
\label{auto}
\end{equation}%
The fixed points of the system in $\overline{Q}$ are 
\begin{equation*}
N_{0}=(0,a_{m,q})=(0,\frac{(N-1)q-N(m-1)}{q+1-m}),\qquad O=(0,0),\qquad
A_{0}=(\frac{N-m}{m-1},0).
\end{equation*}%
We begin by a local study of the different points and the correponding
results for the solutions of (\ref{eqr}):

\begin{lemma}
(i) The point $N_{0}$ is a source, with eigenvalues $0<\lambda _{1}=\frac{q-m%
}{q+1-m}<\lambda _{2}=\frac{(N-1)q-N(m-1)}{m-1}$ and eigenvectors $%
v_{1}=(1,c_{m,q})$ with $c_{m,q}>0$ and $v_{2}=(0,1).$ Then there exist
infinitely many singular decreasing solutions $u$ of (\ref{eqr}) defined near $%
0, $ satisfying (\ref{ima}).\smallskip

\noindent (ii) The point $O$ is a sink, with eigenvalues $0>$ $\xi _{1}=-\frac{N-m}{m-1%
}>\xi _{2}=N-\frac{N-1}{m-1}q,$ and eigenvectors $u_{1}=(1,0)$ and $%
u_{2}=(0,1)$. Then there exist infinitely many solutions $u$ of (\ref{eqm})
defined for large $r$ and either increasing or decreasing near $\infty ,$
satisfying (\ref{ime}) and (\ref{imo})\smallskip

\noindent (iii) The point $A_{0}$ is a saddle point, with eigenvalues $\mu _{1}=-\frac{%
(N-m)p+(N-1)q-(m-1)N}{m-1}<0<\mu _{2}=\frac{N-m}{m-1}$ and eigenvectors $%
w_{1}=(1,-d_{m,q})$ with $d_{m,q}>0$ and $w_{2}=(1,0).$ Then for any $c>0$
there exists a unique solution $u$ of (\ref{eqr}), defined at least for
large $r,$ such that $\lim_{r\rightarrow \infty }r^{\frac{N-m}{m-1}}u=c>0.$
\end{lemma}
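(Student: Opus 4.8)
The plan is to study each of the three stationary points $N_{0}$, $O$, $A_{0}$ of the autonomous system (\ref{XZ}) by linearisation, read off the nature of the point and its eigendata from the signs forced by $1<m<N$, $q>m$, $p\ge 0$, and then convert the phase‑plane picture into the claimed behaviour of the radial solutions through the formulas (\ref{xz})--(\ref{auto}), the scaling (\ref{sca}) accounting for the time‑translation freedom $t\mapsto t+\ln\lambda$.

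First I would compute once and for all the Jacobian of the field $(F,G)$ defining (\ref{XZ}):
\[
J(X,Z)=\begin{pmatrix}
2X-\frac{N-m}{m-1}+\frac{Z}{m-1} & \dfrac{X}{m-1}\\[2mm]
-pZ & N-\frac{N-1}{m-1}q-pX+2\,\frac{q+1-m}{m-1}Z
\end{pmatrix}.
\]
At $N_{0}=(0,a_{m,q})$ the $(1,2)$-entry vanishes, so $J(N_{0})$ is lower triangular; using $a_{m,q}=\frac{(N-1)q-N(m-1)}{q+1-m}$ the diagonal is $(\lambda_{1},\lambda_{2})=\big(\frac{q-m}{q+1-m},\frac{(N-1)q-N(m-1)}{m-1}\big)$, and the triangular form gives $v_{2}=(0,1)$ for $\lambda_{2}$ and $v_{1}=(1,c_{m,q})$, $c_{m,q}=\frac{p\,a_{m,q}}{\lambda_{2}-\lambda_{1}}$, for $\lambda_{1}$. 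At $O=(0,0)$ the matrix is diagonal, $\mathrm{diag}(\xi_{1},\xi_{2})$ with $\xi_{1}=-\frac{N-m}{m-1}$, $\xi_{2}=N-\frac{N-1}{m-1}q$, eigenvectors $(1,0),(0,1)$. At $A_{0}=(\frac{N-m}{m-1},0)$ the $(2,1)$-entry vanishes, so $J(A_{0})$ is upper triangular with diagonal $(\mu_{2},\mu_{1})=\big(\frac{N-m}{m-1},-\frac{(N-m)p+(N-1)q-N(m-1)}{m-1}\big)$, giving $w_{2}=(1,0)$ for $\mu_{2}$ and $w_{1}=(1,-d_{m,q})$, $d_{m,q}=\frac{(m-1)^{2}(\mu_{2}-\mu_{1})}{N-m}$, for $\mu_{1}$. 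All the sign assertions reduce to $(N-1)q-N(m-1)>0$, which holds because $q>m>\frac{N(m-1)}{N-1}$ (the last step being equivalent to $N>m$); this gives $a_{m,q}>0$, $\lambda_{2}>0$, $\xi_{2}<0$, and, with $p\ge0$, $\mu_{1}<0$. Moreover $\lambda_{1}>0$, $\xi_{1}<0$, $\mu_{2}>0$ are immediate, $\xi_{1}>\xi_{2}$ is again equivalent to $q>m$, and $\lambda_{2}>\lambda_{1}$ follows after clearing denominators from $(m-1)(q+1-m)(\lambda_{2}-\lambda_{1})=(N-1)(q-m)^{2}+2(N-m)(q-m)+(N-m)>0$, which also makes $c_{m,q}>0$ and $d_{m,q}>0$. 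Hence $N_{0}$ is a source, $O$ a sink, $A_{0}$ a saddle.

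It remains to translate this. Near the source $N_{0}$ every orbit in a punctured neighbourhood converges to $N_{0}$ as $t\to-\infty$, i.e. $r\to0$, and stays in $\{X>0,Z>0\}$, which by (\ref{auto}) means $u'<0$; the unstable‑manifold description forces $X\sim c\,r^{\lambda_{1}}$ with $\lambda_{1}>0$ (or faster), so $X=-ru'/u$ is integrable in $r$ near $0$, whence $\ln u$ and then $u$ has a finite positive limit $u_{0}$, and $Z=r\,u^{p}|u'|^{q-m+1}\to a_{m,q}$ yields (\ref{ima}); there is a continuum of such orbits, hence infinitely many solutions. Running the scheme forward ($t\to+\infty$, $r\to\infty$) at the sink $O$, through $\{X>0,Z>0\}$ ($u$ decreasing) and through $\{X<0,Z<0\}$ ($u$ increasing), produces infinitely many solutions near $\infty$; integrating $d(\ln u)/dt=-X$ with $|X|\le C\,r^{\xi_{1}}$, $\xi_{1}<0$, gives a finite positive limit $l$, and from $X\sim c\,r^{\xi_{1}}$ that $r^{-\xi_{1}}(u-l)=r^{\frac{N-m}{m-1}}(u-l)$ has a nonzero limit, i.e. (\ref{ime}); prescribing that limit fixes the scaling parameter, giving (\ref{imo}). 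At the saddle $A_{0}$ only the branch of the one‑dimensional stable manifold lying in $Q$ (the one along $-w_{1}$, since $w_{1}=(1,-d_{m,q})$ with $d_{m,q}>0$ exits $Q$ on the other side) carries an orbit, unique up to $t\mapsto t+\ln\lambda$, i.e. up to $T_{\lambda}$; on it $X\to\frac{N-m}{m-1}$ with $X-\frac{N-m}{m-1}\sim\mathrm{const}\cdot e^{\mu_{1}t}$ integrable near $+\infty$, so $\ln u+\frac{N-m}{m-1}\ln r$ converges and $r^{\frac{N-m}{m-1}}u\to c$ for some $c>0$; since $T_{\lambda}$ moves $c$ over $(0,\infty)$, every $c>0$ is realised by exactly one solution.

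The step I expect to be the main obstacle is precisely this last translation, not the linear algebra: passing rigorously from ``the orbit tends to the fixed point'' to the sharp asymptotics of $u$ with a nonzero limiting constant requires the stable/unstable‑manifold theorem (or Hartman--Grobman) to pin the exponential approach rate to the correct eigenvalue, a separate discussion of the exceptional orbits approaching along the slow eigendirection, and the integrability argument upgrading $d(\ln u)/dt\to\mathrm{const}$ to convergence of $\ln u$ minus its linear part. This is exactly the ``quadratic system'' machinery of \cite{BiGi}, which I would invoke rather than reprove.
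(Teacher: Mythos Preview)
Your proposal is correct and follows essentially the same route as the paper: compute the Jacobian of (\ref{XZ}) at each fixed point, read off the eigenvalues and eigenvectors from the (lower- or upper-) triangular structure, check the signs using $q>m$ and $(N-1)q>N(m-1)$, and then translate the local phase portrait back to $u$ via (\ref{auto}) and the scaling (\ref{sca}). Your eigenvector formulas $c_{m,q}=\dfrac{p\,a_{m,q}}{\lambda_{2}-\lambda_{1}}$ and $d_{m,q}=\dfrac{(m-1)^{2}(\mu_{2}-\mu_{1})}{N-m}$ agree with the paper's explicit expressions, and your verification of $\lambda_{2}>\lambda_{1}$ via $(m-1)(q+1-m)(\lambda_{2}-\lambda_{1})=(N-1)(q-m)^{2}+2(N-m)(q-m)+(N-m)$ is exactly what the paper leaves implicit; note only that $c_{m,q}>0$ uses $p>0$ strictly, consistent with the subsection hypothesis.
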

\noindent\begin{proof}
(i) We perform the linearization at $N_{0}:$ setting $Z=a_{m,q}+\overline{Z},$
we get, with $X>0$ and $Z>0,$ 
\begin{equation*}
\left\{ 
\begin{array}{ccc}
X_{t} & = & \frac{q-m}{q+1-m}X \\ 
\overline{Z}_{t} & = & a_{m,q}(-pX+\frac{q+1-m}{m-1}\overline{Z}),%
\end{array}%
\right.
\end{equation*}%
which gives the eigenvalues $\lambda _{1},\lambda _{2}$ and their
respective eigenvectors, with the value of  $c_{m,q}$
\begin{equation*}
c_{m,q}=\frac{p(N-1)q-N)(m-1)}{(N-1)q^{2}-2N(m-1)q+(m-1)(N(m-1)+m)}.
\end{equation*}%
So $N_{0}$ is a source; the particular trajectory $X=0$ associated to $%
\lambda _{2}$ is not admissible. There exists an infinity of trajectories
starting from $N_{0}$ as $t\rightarrow -\infty $, associated to the
eigenvalue $\lambda _{1};$ the solutions $(X,Z)$ satisfy $X>0,$ $%
\lim_{t\rightarrow -\infty }e^{-\frac{q-m}{q+1-m}t}X=C_{0},$ where $C_{0}>0$
is arbitrary and $\lim_{t\rightarrow -\infty }Z=a_{m,q}$; then from (\ref%
{auto}) and the definition of $Z$ , there exist  infinitely many decreasing singular
solutions $u$ of (\ref{eqr}) defined near $0,$ satisfying (\ref%
{ima}).\smallskip

\noindent (ii) The linearisation at $O$ gives the system 
\begin{equation*}
\left\{ 
\begin{array}{ccc}
X_{t} & = & -\frac{N-m}{m-1}X \\ 
Z_{t} & = & (N-\frac{N-1}{m-1}q)Z,%
\end{array}%
\right.
\end{equation*}%
with admits the eigenvalues $\xi _{1},$ $\xi _{2}.$ So $O$ is a sink, two
particular trajectories are the axis $X=0$ and $Z=0$ which not admissible. There
is an infinity of trajectories converging to $O$ as $t\rightarrow \infty ,$
tangent to the axis $Z=0,$ associated to the eigenvalue $\xi _{1},$ with
either $X,Z>0$, or $X,Z<0$ . They satisfy 
\begin{equation}
X\sim _{t\rightarrow \infty }C_{1}e^{-\frac{N-m}{m-1}t},\quad Z=\sim
_{t\rightarrow \infty }C_{2}e^{(N-\frac{N-1}{m-1}q)t},\text{ with }%
C_{1},C_{2}>0.  \label{c12}
\end{equation}%
The corresponding solutions $u$ of (\ref{eqr}) are defined for large $r,$
and either decreasing or increasing; from (\ref{auto}), we obtain $%
\lim_{r\rightarrow \infty }u=(C_{1}^{m-1-q}C_{2})^{\frac{1}{p+q-m+1}}=l>0$
and $\lim_{r\rightarrow \infty }r^{\frac{N-1}{m-1}}u^{\prime }=-lC_{1},$
thus $\lim_{r\rightarrow \infty }r^{\frac{N-m}{m-1}}(u-l)=-lC_{1}.$ Thus (%
\ref{ime}) and (\ref{imo}) follow. The uniqueness property follows from the
uniqueness of a trajectory satisfying (\ref{c12}) for given $C_{1},C_{2},$
see also Remark \ref{cha} below.\smallskip

\noindent (iii) Linearisation at $A_{0}$ $:$ setting $X=\frac{N-m}{m-1}+\overline{X},$
we get%
\begin{equation*}
\left\{ 
\begin{array}{ccc}
\overline{X}_{t} & = & \frac{N-m}{m-1}(\overline{X}+\frac{Z}{m-1}) \\ 
Z_{t} & = & -\frac{(N-m)p+(N-1)q-(m-1)N}{m-1}Z,%
\end{array}%
\right.
\end{equation*}%
which admits the eigenvalues $\mu _{1}<0<\mu _{2}$ and the eigenvectors, with 
\begin{equation*}
d_{m,q}=\frac{m-1}{N-m}(N-m+\left\vert \mu _{1}\right\vert (m-1)
\end{equation*}
It is a saddle point. The trajectory $X=0$ associated to $\mu _{2}$ is not
admissible. Then a unique trajectory $\mathcal{T}_{A_{0}}$ converging to $%
A_{0}$ as $t\rightarrow \infty .$ By the scaling (\ref{sca}), we deduce the
uniqueness property for $u$.
\end{proof}

Next we a complete description of the local and global solutions in the
phase-plane leading to the conclusions of Theorem \ref{radial}:\medskip

\noindent\begin{proof}[Proof of Theorem \protect\ref{radial} when $p>0$]
\label{mark} We consider the sets 
\begin{equation*}
\mathcal{L}_{X}\mathcal{=}\left\{ (X,Z)\in Q:X_{t}=0\right\} =\left\{
(X,Z)\in Q:X-\frac{N-m}{m-1}+\frac{Z}{m-1}=0\right\},
\end{equation*}%
\begin{equation*}
\mathcal{L}_{Z}\mathcal{=}\left\{ (X,Z)\in Q:Z_{t}=0\right\} =\left\{
(X,X)\in Q:N-\frac{N-1}{m-1}q-pX+\frac{q+1-m}{m-1}Z\right\}.
\end{equation*}%
The straight line $\mathcal{L}_{X}$ has an extremity at $A_{0},$ with slope $-(m-1),$
and the slope of $\mathcal{T}_{A_{0}}$ is $-d_{m,q}<-(m-1)$, so $\mathcal{T}%
_{A_{0}}$ is above $\mathcal{L}_{X}$ near $t=\infty .$ The line $\mathcal{L}%
_{X}$ has an extremity at $N_{0}$ , with slope $\frac{p(m-1)}{q+1-m},$ is
located above $\mathcal{L}_{X}$ for $X>0.$ The trajectories issued from $%
N_{0}$ have the slope $c_{m,q},$ and we check that it is greater than $\frac{%
p(m-1)}{q+1-m}$ because $q>1-m;$ so they start above $\mathcal{L}_{Z}.$

(i) The trajectory $\mathcal{T}_{A_{0}}$ stays in the region $\mathcal{R=}%
\left\{ 0<X<\frac{N-m}{m-1};X-\frac{N-m}{m-1}+\frac{Z}{m-1}>0\right\} $
which is negatively invariant. Then $X_{t}$ stays positive, thus $X$ is
increasing, hence bounded. Either $\mathcal{T}_{A_{0}}$ stays under $%
\mathcal{L}_{Z},$ then $Z_{t}<0$ and $Z$ is bounded, thus $\mathcal{T}%
_{A_{0}}$ converges to $N_{0},$ or it crosses the line $\mathcal{L}_{Z}$ at
time $t_{0},$ and for $t<t_{0}$ there holds $Z_{t}>0$ so that $Z$ stays
bounded, and $\mathcal{T}_{A_{0}}$ still converges to $N_{0}$; in fact the
second eventuality holds, because of the slope of the eigenvector at $N_{0}.$
So the trajectory, $\mathcal{T}_{A_{0}}$ joins $N_{0}$ to $A_{0}$. By
scaling, for any $u_{0}>0$ there exists a unique solution $u$ defined in $%
(0,\infty )$ satisfying (\ref{imu}).

(ii) All the trajectories with one point in the bounded invariant region $%
\mathcal{R}^{\prime }$ delimitated by the axis $X=0,Z=0$ and $\mathcal{T}%
_{A_{0}},$ join $N_{0}$ to $O,$ and the corresponding solutions $u$ are
positive on $(0,\infty )$, decreasing, and satisfy (\ref{ima}). The
trajectories with one point in the region $\mathcal{R}^{\prime \prime
}\subset Q$ above $\mathcal{T}_{A_{0}}$ converge to $N_{0}$ as $t\rightarrow
-\infty ,$ and satisfy $X_{t}>0,$ since $\mathcal{T}_{A_{0}}$ is above $%
\mathcal{L}_{X},$ and cannot be bounded, since there is no fixed point in
this region. They can be of two types:

$\bullet $ Either they cross $\mathcal{L}_{Z},$ then after crossing $Z$ is
decreasing, necessarily to $0;$ then from (\ref{auto}), $u$ is defined in a
maximal interval $(0,\rho )$ with $u(\rho )=0.$ Such solutions exists
because by any point on $\mathcal{L}_{Z}$ passes a trajectory.

$\bullet $ Or they stay above $\mathcal{L}_{Z},$ thus $Z$ increases to $%
\infty ;$ in this case from (\ref{auto}) $u$ is defined in a maximal
interval $(0,\rho )$ with $\lim_{r\rightarrow \rho }$ $u^{\prime }=-\infty ;$
Let us show the existence of such solutions: For given $c>0,$ we define%
\begin{equation*}
\mathcal{L}_{c}=\left\{ (X,Z)\in Q:X>0,Z=cX+a_{m,q}\right\}.
\end{equation*}%
We compute the field on this line, and show that it is entering the region
above $\mathcal{L}_{c}$ for $c$ large enough: indeed we obtain, 
\begin{eqnarray*}
\frac{Z_{t}}{X_{t}}-c &=&\frac{Z(\frac{q+1-m}{m-1}(Z-a_{m,q})-pX)}{X(X-\frac{%
N-m}{m-1}+\frac{cX+a_{m,q}}{m-1})}-c \\
&=&\frac{Z(c\frac{q+1-m}{m-1}-p)}{X-\frac{N-m}{m-1}+\frac{cX+a_{m,q}}{m-1}}%
-c>\frac{(c(q+1-m)-p(m-1))(cX+a_{m,q})}{(m-1+c)X+a_{m,q}-(N-m)}-c,
\end{eqnarray*}%
and 
\begin{equation*}\begin{array}{lll}
(c(q+1-m)-p(m-1))(cX+a_{m,q})-c((m-1+c)X+a_{m,q}-(N-m)) \\[2mm]
\;=cX(c(q-m)-(p+1)(m-1)-(p+a_{m,q}(m-1))+c(q-m)a_{m,q}+N-m)-p(m-1)a_{m,q}
\end{array}\end{equation*}%
is positive for large $c,$ since $q-m>0.$ All the solutions with one point
above $\mathcal{L}_{c}$ stay in this region, so above $\mathcal{L}_{Z},$
which proves the existence.

(iii) All the trajectories with one point in $\left\{ (X,Z)\in Q:X<0\right\} 
$ satisfy $X_{t}>0$ from (\ref{XZ}). Then $X$ increases necessarily up to $0,$
and then $Z_{t}>0$ for large $t,$ thus $(X,Z)$ converges to $O,$ and $u$ is
defined for $r$ large enough, increasing and $\lim_{r\rightarrow \infty
}u=l>0.$

$\bullet $ Either they cross $\mathcal{L}_{Z},$ then before crossing $Z$ is
decreasing, necessarily to $0;$ then from (\ref{auto}), $u$ is defined in a
maximal interval $(0,\rho )$ with $u(\rho )=0.$ Such solutions exist as
above.

$\bullet $ Or they stay under $\mathcal{L}_{Z},$ thus $X$ and $Z$ decrease
to $-\infty ;$ in this case from (\ref{auto}) $u$ is defined in a maximal
interval $(0,\rho )$ with $\lim_{r\rightarrow \rho }$ $u^{\prime }=-\infty .$
Let us show their existence: for given $k>0$ we compute the field on the
line $\mathcal{L}^{k}=\left\{ (X,Z)\in Q:X<0,Z=kX\right\} .$ On this line $%
X_{t}>0$ and 
\begin{equation*}\begin{array}{lll}
Z_{t}-kX_{t} =Z(N-\frac{N-1}{m-1}q-pX+\frac{q+1-m}{m-1}Z-X+\frac{N-m}{m-1}-%
\frac{Z}{m-1}) \\[2mm]\phantom{Z_{t}-kX_{t} }
=Z((\frac{q-m}{m-1}-\frac{p+1}{k})Z-\frac{N-1}{m-1}(q-m))=Z^{2}(\frac{q-m}{%
m-1}-\frac{p+1}{k})+\frac{N-1}{m-1}(q-m)\left\vert Z\right\vert,
\end{array}\end{equation*}%
is positive for large $k.$ The region below $\mathcal{L}^{k}$ is therefore
negatively invariant, then the existence is folllows. This conclude the
proof.
\end{proof}

\begin{remark}
\label{cha}The change of variable $u(r)=\tilde{u}(s),s=r^{\frac{m-N}{%
m-1}}$, introduced in \cite{GuVe}, and also used in \cite{BuGMQu} in case $m=2<N,$ leads to the equation 
\begin{equation}
(\left\vert \tilde{u}_{s}\right\vert _{s}^{m-2}\tilde{u})_{s}+\left(\frac{m-N}{m-1%
}\right)^{q-m}s^{\frac{N-1}{N-m}(q-m)}\tilde{u}^{p}\left\vert \tilde{u}%
_{s}\right\vert ^{q}=0.  \label{ori}
\end{equation}%
Hence if $u$ is not constant $\tilde{u}_{s}$ does
not vanish, from Remark \ref{van}, and (\ref{ori}) is equivalent to 
\begin{equation}
(m-1)\tilde{u}_{ss}+\left(\frac{m-N}{m-1}\right)^{q-m}s^{\frac{N-1}{N-m}(q-m)}\tilde{u}%
^{p}\left\vert \tilde{u}_{s}\right\vert ^{q-m+2}=0.  \label{ord}
\end{equation}%
In particular we find again the existence and uniqueness of local solutions
near $\infty ,$ satisfying (\ref{imo}) for given $l\geq 0$ and $c\neq 0$ $%
(c>0$ if $l=0);$ indeed the problem reduces to the equation (\ref{ord}) with the initial conditions $\tilde{u}(0)=l$ and $\tilde{u}_{s}(0)=c.$
\end{remark}

\section{The case $p<0$\label{exten}}
\noindent\begin{proof}[Proof of Theorem \protect\ref{pneg}]
We still consider $u=v^{b},$ with $b>0:$ we recall that from (\ref{eqv}) (%
\ref{cos}) 
\begin{equation*}
-\Delta _{m}v=(b-1)(m-1)\frac{\left\vert \nabla v\right\vert ^{m}}{v}%
+b^{q-m+1}v^{s}\left\vert \nabla v\right\vert ^{q},
\end{equation*}%
with $s=1-q+m+b(p+q-m+1)$. Next we take 
\begin{equation*}
b=\frac{q+1-m}{p+q-m+1},
\end{equation*}%
thus here $b\geq 1$ and $s=0,$ so, 
\begin{equation}
-\Delta _{m}v=(b-1)(m-1)\frac{\left\vert \nabla v\right\vert ^{m}}{v}%
+b^{q-m+1}\left\vert \nabla v\right\vert ^{q},  \label{sig}
\end{equation}%
where the two terms have the same sign. Then $z=\left\vert \nabla
v\right\vert ^{2}$ satisfies 
\begin{equation*}
\mathcal{A}(v)=-\Delta v-\frac{m-2}{2}\frac{<\nabla z,\nabla v>}{z}%
=(b-1)(m-1)\frac{z}{v}+b^{q-m+1}z^{\frac{q+2-m}{2}}.
\end{equation*}%
Setting $,$ we get from (\ref{estiv})%
\begin{eqnarray*}
&&-\frac{1}{2}\mathcal{A}(z)+\frac{1}{N}(\Delta v)^{2}+(b-1)(m-1)\frac{z^{2}%
}{v^{2}} \\
&\leq &(b-1)(m-1)(\frac{<\nabla z,\nabla v>}{v}+\frac{q+2-m}{2}b^{q-m+1}z^{%
\frac{q-m}{2}}<\nabla z,\nabla v>,
\end{eqnarray*}%
where now the term in $\frac{z^{2}}{v^{2}}$ has a positive coefficient.
Since $b\geq 1,$ we get an estimate of the form 
\begin{equation*}
\mathcal{A}(z)+C_{1}z^{q+2-m}\leq C_{3}\frac{\left\vert \nabla z\right\vert
^{2}}{z}.
\end{equation*}%
Since $q+2-m>1,$ we deduce the estimate in any ball $B_\rho (x_{0})$, 
\begin{equation*}
\left\vert \nabla v(x_{0})\right\vert \leq C\left(\frac{1}{\rho }\right)^{\frac{1}{q+1-m%
}},
\end{equation*}%
from Lemma \ref{oss}, where $C$ is a universal constant, which leads to the
conclusions.\end{proof}

\end{document}